\numberwithin{equation}{section}
\newtheorem{theorem}{Theorem}[section]
\newtheorem{proposition}[theorem]{Proposition}
\newtheorem{corollary}[theorem]{Corollary}
\newtheorem{lemma}[theorem]{Lemma}
\newtheorem{assumption}{Assumption}
\theoremstyle{definition}
{
\newtheorem{remark}[theorem]{Remark}

\newtheorem{defn}[theorem]{Definition}
}}
\newcommand{\cal}{\mathcal}
\newcommand{\BB}{{\cal B}}
\newcommand{\NN}{{\cal N}}
\newcommand{\XX}{{\cal X}}
\newcommand{\ZZ}{{\cal Z}}
\newcommand{\Nn}{{\mathbb{N}}}
\newcommand{\Pp}{{\mathbb{P}}}
\newcommand{\Rr}{{\mathbb{R}}}
\newcommand{\Tt}{{\mathbb{T}}}
\newcommand{\Zz}{{\mathbb{Z}}}
\def\SL{\operatorname{SL}}
\def\tr{\operatorname{tr}}
\def\supp{\operatorname{supp}}
\begin{document}
\title
[Lyapunov exponents and rigidity]{Lyapunov exponents and rigidity in random billiards and standard maps}

\date{\today}

\author[Del Magno]{Gianluigi Del Magno}
\address{Dipartimento di Matematica, Universit\`a di Pisa \\
Largo Bruno Pontecorvo 5, 56127 Pisa, Italy}
\email{gianluigi.delmagno@unipi.it}

\author[Lopes Dias]{Jo\~{a}o Lopes Dias}
\address{Universidade de Lisboa, ISEG Lisbon School of Economics \& Management, ISEG Research \\
Rua do Quelhas 6, 1200-781 Lisboa, Portugal}
\email{jldias@iseg.ulisboa.pt}

\author[Gaiv\~ao]{Jos\'e Pedro Gaiv\~ao}
\address{Universidade de Lisboa, ISEG Lisbon School of Economics \& Management, ISEG Research \\
Rua do Quelhas 6, 1200-781 Lisboa, Portugal}
\email{jpgaivao@iseg.ulisboa.pt}

\begin{abstract}
We study random dynamical systems generated by measure-preserving maps, possibly with singularities. For this class of systems, we establish an invariance principle: if all Lyapunov exponents coincide almost everywhere, then there exists an invariant measurable family of probability measures on the projective tangent bundle for the projective cocycle induced by the derivative. We apply this principle to random additive perturbations of two classes of maps: billiard maps for strictly convex tables on surfaces of constant curvature and generalized standard maps. In both settings, we obtain rigidity results: the Lyapunov exponents vanish almost everywhere precisely for billiards in geodesic disks and generalized standard maps with constant potential; in all other cases, they are nonzero almost everywhere.

\end{abstract}

\maketitle
\tableofcontents

\section{Introduction}
\label{sec:introduction}
Numerical simulations indicate that many volume-preserving maps exhibit a mixed phase space, with coexisting regions of regular and chaotic dynamics; see~\cite{Strelcyn1991} and the references therein. This behavior occurs in a broad class of systems, including area-preserving twist maps and billiards in convex tables, and in many examples suggests the presence of positive Lyapunov exponents on a set of positive volume. Despite substantial numerical and heuristic evidence, establishing this property rigorously for deterministic conservative systems remains a difficult problem.

This difficulty motivates the study of conservative systems in the presence of noise. Random perturbations are expected to weaken invariant structures and improve ergodic properties, thereby promoting exponential instability. In this paper, we study random additive perturbations of conservative maps and ask whether they have non-zero Lyapunov exponents almost everywhere. When the exponents vanish, we ask whether the underlying deterministic system has special geometric or algebraic properties. This is the rigidity problem at the center of the paper. 

A key tool in our analysis is an invariance principle for random dynamical systems generated by maps with singularities. This terminology refers to maps that are $C^1$ diffeomorphisms on an open set of full measure, as in the framework of~\cite{Katok:1986tg}.

Let $M$ be a Riemannian manifold, let $\nu$ be a probability measure on a space $X$, and let
\[
f_x\colon M\to M,
\qquad x\in X,
\]
be a measurable family of maps preserving a common probability measure $m$ on $M$. Set
\[
\Omega=X^{\Nn},
\qquad
\rho_\nu=\nu^{\Nn},
\]
write $\omega=(\omega_0,\omega_1,\ldots) \in \Omega$, and denote by $\sigma\colon\Omega\to\Omega$ the left shift. The associated random dynamical system is the skew product
\[
F\colon\Omega\times M\to\Omega\times M,
\qquad
F(\omega,y)
=
\bigl(\sigma(\omega),f_{\omega_0}(y)\bigr).
\]
The random compositions generated by the family $(f_x)_{x\in X}$ are
\[
F_\omega^0=\operatorname{id}_M,
\qquad
F_\omega^n
=
f_{\omega_{n-1}}\circ\cdots\circ f_{\omega_0},
\qquad n\geq1.
\]

We assume that each $f_x$ restricts to a $C^1$ diffeomorphism on an open set of full $m$-measure and that the derivative cocycle satisfies suitable integrability conditions. We denote by $\lambda_F^+$ and $\lambda_F^-$ the maximal and minimal Lyapunov exponents of $F$, respectively. The precise assumptions and definitions are given in Section~\ref{sec:random maps}. Under these hypotheses, we prove the following result.

\begin{theorem}
Suppose that $\lambda_F^-(\omega,y) = \lambda_F^+(\omega,y)$ for $(\rho_\nu\times m)$-almost every $(\omega,y)\in\Omega\times M$. Then there exists a probability measure $\eta$ on the projective tangent bundle $PM$ such that $\pi_*\eta=m$, where $\pi\colon PM\to M$ is the bundle projection, and
\[
(Df_x)_*\eta=\eta
\qquad \text{for } \nu \text{-a.e. } x\in X.
\]
Equivalently, if $\{\eta_y:y\in M\}$ denotes the disintegration of $\eta$ over $m$, then
\[
Df_x(y)_*\eta_y
=
\eta_{f_x(y)}
\]
for $(\nu\times m)$-almost every $(x,y)\in X\times M$.
\end{theorem}

Following Avila and Viana~\cite{Avila:2010aa}, we refer to this result as an invariance principle. It provides the basic rigidity mechanism used throughout the paper: coincidence of the extremal Lyapunov exponents forces the existence of a non-random invariant structure on the projective tangent bundle. The result is obtained by applying a theorem of Ledrappier~\cite{Ledrappier:1986ue} for linear cocycles over stationary Markov chains to the derivative cocycle of the random system. The underlying principle goes back to Furstenberg's theory of random matrix products~\cite{Furstenberg:1963uq}. Analogous invariance principles for random diffeomorphisms were proved by Carverhill~\cite{carverhill1987furstenberg} and Baxendale~\cite{Baxendale:1989aa}.

We next specialize the invariance principle to random additive perturbations of conservative maps on the torus. Let
\[
g\colon\Tt^d\to\Tt^d
\]
be a map that preserves the normalized Haar measure $m$ and satisfies the regularity and integrability assumptions introduced in Definition~\ref{def:toral_map}. In this setting, $X=\Tt^d$ and $\Omega=(\Tt^d)^{\Nn}$. For each $x\in\Tt^d$, define
\[
f_x(y)=g(y)+x.
\]
Given an i.i.d.\ sequence $\omega=(\omega_n)_{n\geq0} \in \Omega$ of $\Tt^d$-valued random variables with common distribution $\nu$, consider the corresponding random compositions $F_\omega^n$. 

For these systems, the sum of the Lyapunov exponents vanishes almost everywhere. Consequently, $\lambda_F^-=\lambda_F^+$ almost everywhere if and only if $\lambda_F^+=0$ almost everywhere. The invariance principle therefore has the following consequence. We write $\SL^{\pm}(d,\Rr)$ for the set of real $d \times d$ matrices with determinant $\pm 1$.

\begin{theorem}
Let $F$ be a $\nu$-random additive perturbation of a toral map
$g\colon\Tt^d\to\Tt^d$, and let $N\subset\Tt^d$ be an open set of full
$m$-measure such that $g|_N\colon N\to g(N)$ is a $C^1$ diffeomorphism. Suppose that $\lambda_F^+(\omega,y)=0$ for $(\rho_\nu\times m)$-almost every
$(\omega,y)\in\Omega\times\Tt^d$. Then there exists a measurable family
of probability measures $\{\eta_y:y\in\Tt^d\}$ on $\mathbb P^{d-1}$ such
that
\[
Dg(y)_*\eta_y=\eta_{g(y)+x}
\]
for $m$-almost every $y\in\Tt^d$ and $\nu$-almost every $x\in\Tt^d$. If, in addition, $\nu$ is absolutely continuous with respect to $m$, then
there exists a probability measure $\eta$ on $\mathbb P^{d-1}$ such that
\[
h_*\eta=\eta
\]
for every $h\in\operatorname{supp}\bigl((Dg)_*(m|_N)\bigr)$. Hence, the support of $(Dg)_*(m|_N)$ is either contained in a
compact subgroup of $\operatorname{SL}^{\pm}(d,\Rr)$ or preserves a finite family of proper subspaces of $\Rr^d$.
\end{theorem}

Thus, for random additive perturbations with absolutely continuous noise, vanishing Lyapunov exponents force the derivatives of the underlying deterministic map to preserve a common projective probability measure. 

We apply this rigidity mechanism to two fundamental classes of conservative dynamical systems: billiards in convex tables and generalized standard maps. For random additive perturbations of convex billiards, we prove that the Lyapunov exponents vanish almost everywhere precisely if and only if the table is a geodesic disk. For random additive perturbations of generalized standard maps, they vanish almost everywhere if and only if the potential is constant.

We begin with billiards on surfaces of constant curvature. Let $S$ be one of the standard Riemannian surfaces of constant curvature: the Euclidean plane, the sphere, or the hyperbolic plane. Let $D\subset S$ be a strictly convex domain whose boundary $\partial D$ is a $C^2$ simple closed curve of positive geodesic curvature and unit length. We refer to such a domain $D$ as a convex table. A billiard in $D$ describes the motion of a point particle travelling along geodesics of $S$ inside $D$ and undergoing elastic reflections at $\partial D$.

The dynamics is encoded by the billiard map $\Phi\colon V\to V$, a twist map on the collision space $V=S^1\times[-1,1]$. Each point $(s,r)\in V$ represents a collision with the boundary: $s$ is the arc-length parameter of the collision point on $\partial D$, and $r=-\cos\theta$, where $\theta\in[0,\pi]$ is the reflection angle measured with respect to the tangent to $\partial D$ at $s$. The map $\Phi$ sends each collision to the next collision along the trajectory. After the identification described in Section~\ref{sec:billiards}, the billiard map induces a map $T\colon\Tt^2\to\Tt^2$ preserving the normalized Haar measure $m$.

Despite extensive study, it remains an open problem whether billiards in strictly convex tables with smooth boundary can exhibit positive metric entropy with respect to $m$. All known examples of billiards with this property involve tables that are nor strictly convex nor smooth; see, for instance, \cite{Bunimovich:79,Wojtkowski:86,Donnay:91,Markarian:94,Gutkin:99}. Although generic convex billiards have positive topological entropy \cite{BessaDelMagnoLopesDiasGaivaoTorres24,Cheng2004400,ZHANG2017793}, this does not by itself provide examples with positive metric entropy with respect to $m$.

The following theorem establishes a sharp rigidity dichotomy for random additive perturbations of convex billiards, distinguishing geodesic disks from all other convex tables. Here $T$ denotes the billiard map on $\Tt^2$, the maps
\[
f_x(y)=T(y)+x, \qquad x \in \Tt^2,
\]
are its additive perturbations, and $F_\omega^n$ denotes the corresponding random composition. Additive perturbations of the billiard map admit a natural geometric interpretation. Given $x=(\bar s,\bar r)\in\Tt^2$ and an initial collision $(s_0,r_0)$, one first computes the next collision $(s_1,r_1)=T(s_0,r_0)$
and then translates it by $x$ in phase space, shifting the arc-length coordinate by $\bar s$ and the $r$-coordinate by $\bar r$.

\begin{theorem}
\label{thm:main}
Let $D$ be a convex billiard table on a surface $S$ of constant curvature. Assume that the common distribution $\nu$ of the random variables $(\omega_n)_{n\geq0}$ is absolutely continuous with respect to $m$. Then the Lyapunov exponents of the random compositions $F_\omega^n$ are non-zero almost everywhere if $D$ is not a geodesic disk, and vanish almost everywhere otherwise.
\end{theorem}

This theorem reflects the exceptional symmetry of circular billiards on surfaces of constant curvature. If $D$ is a geodesic disk, then the phase space is foliated by non-contractible invariant circles \cite{Bialy:1993ty,dosSantos:2017}, a property commonly referred to as total integrability. Under an additive perturbation, this foliation remains globally invariant: individual circles need not be invariant, but each circle is mapped onto another circle of the same foliation. Consequently, the derivative cocycle has no exponential growth, and the Lyapunov exponents vanish almost everywhere.

The converse is the rigidity direction of the theorem. Suppose that the Lyapunov exponents vanish. The invariance principle and its consequences for additive perturbations yield a common invariant projective probability measure. The factorization of the derivative cocycle, together with its behavior near tangential collisions, forces this measure to be the Dirac mass at the horizontal direction. Hence the horizontal direction is invariant under the derivative of the billiard map. This implies that the billiard foliation by horizontal circles is invariant and ultimately forces the boundary of the table to have constant geodesic curvature. Therefore $D$ is a geodesic disk.

Theorem~\ref{thm:main} considerably broadens the scope of the result of Zhang and Nguyen~\cite{NguyenZhang2026}, who proved the existence of non-zero Lyapunov exponents for random additive perturbations of billiards in non-circular elliptical tables and lemon-shaped tables in the Euclidean plane. It also provides a new rigidity characterization of circular billiards among convex billiards on surfaces of constant curvature (cf.~\cite{Bialy:1993ty} and~\cite{CzudekDeSimoiGadPoon26}). 

The second class of conservative systems considered in this paper consists of generalized standard maps. Given a $C^1$ function
\[
V\colon\Tt^1\to\Rr,
\]
the associated generalized standard map $g_V\colon\Tt^2\to\Tt^2$ is
\[
g_V(y_1,y_2)
=
\bigl(y_1+y_2+V(y_1),\,y_2+V(y_1)\bigr)
\pmod 1.
\]
For $V(y_1)=K\sin(2\pi y_1)$, this is the standard map introduced by Chirikov~\cite{CHIRIKOV1979263}; the case $K=0$ is completely integrable.

Generalized standard maps arise naturally in Hamiltonian dynamics as models of periodically kicked systems and play a central role in the study of the transition from integrability to chaos. Nevertheless, proving the existence of non-zero Lyapunov exponents for non-integrable standard maps and their generalizations remains notoriously difficult.

The following theorem is the counterpart of Theorem~\ref{thm:main} for generalized standard maps. For each $x\in\Tt^2$, define
\[
f_x(y)=g_V(y)+x,
\]
and let $F_\omega^n$ denote the corresponding random composition. The theorem establishes the corresponding sharp rigidity dichotomy, distinguishing constant potentials from non-constant ones. In this setting, the conclusion holds both for absolutely continuous noise and for a class of degenerate noise distributions.

We call a Borel probability measure $\nu$ on $\Tt^2$ a \emph{vertical line measure} if
\[
\nu=\delta_0\times\chi,
\]
where $\delta_0$ is the Dirac measure at $0\in\Tt^1$ and
$\chi=h\,m_1$ is absolutely continuous with respect to the normalized Haar measure $m_1$ on $\Tt^1$.

\begin{theorem}
\label{thm:standard}
Let $g_V\colon\Tt^2\to\Tt^2$ be a generalized standard map. Assume that the common distribution $\nu$ of the random variables $(\omega_n)_{n\geq0}$ is either absolutely continuous with respect to $m$ or a vertical line measure. Then the Lyapunov exponents of the random compositions $F_\omega^n$ are non-zero almost everywhere if $V$ is non-constant, and vanish almost everywhere otherwise.
\end{theorem}

The derivative matrices of a non-constant generalized standard map contain both hyperbolic and elliptic elements, which excludes the alternatives compatible with an invariant projective probability measure. For vertical line noise, the random variables are grouped in pairs, reducing the problem to an additive perturbation with an absolutely continuous noise distribution.

When $\nu$ is absolutely continuous, the result follows 
from the invariance principle. If $V$ is non-constant, among the derivative matrices, there are both hyperbolic and elliptic elements, which excludes the alternatives compatible with an invariant projective probability measure. For vertical line noise, the random variables are grouped in pairs, reducing the problem to a random additive perturbation of $g_V^2$ with an absolutely continuous noise distribution.

For random standard maps, Blumenthal, Xue, and Young~\cite{BlumenthalXueYoung} obtained quantitative lower bounds for the maximal Lyapunov exponent under sufficiently strong random additive perturbations. In contrast, the present result does not require the noise to be large and gives a sharp characterization of the vanishing of the Lyapunov exponents.

The proofs of the two theorems follow a common rigidity scheme. First, we prove that the stationary measure is ergodic for the relevant random additive perturbations. This yields a dichotomy for the maximal Lyapunov exponent: it either vanishes almost everywhere or is positive almost everywhere. In the vanishing case, the invariance principle produces a measurable invariant family of projective measures. The additive noise promotes this family to a common projective measure, and the structure of the derivative cocycle turns that measure into an invariant geometric structure for the deterministic map. The final rigidity step classifies the maps carrying this structure: the billiard table must be a geodesic disk, and the potential of the generalized standard map must be constant.

The paper is organized as follows. Section~\ref{sec:random maps} establishes a general invariance principle for random piecewise smooth maps and derives several consequences needed later in the paper. Section~\ref{se:ergodicity} proves the unique ergodicity of random additive perturbations and the equidistribution of their random orbits in two general settings. These settings cover the random additive perturbations of billiard maps and generalized standard maps treated in the final sections. Section~\ref{sec:billiards} provides background on convex billiards on surfaces of constant curvature. The proof of Theorem~\ref{thm:main} is given in Section~\ref{ss:random_billiards}. Finally, Section~\ref{sec:random_standard} is devoted to random generalized standard maps and contains the proof of
Theorem~\ref{thm:standard}.

\section{An invariant principle for random maps}
\label{sec:random maps}
\subsection{Random dynamical systems} 
We model a random dynamical system as a skew-product over a shift transformation. The definition of a random dynamical system provided below is not the most general. For more general definitions, see \cite{Arnold:98,Kifer:86}.


Throughout the paper, $\Nn$ denotes the natural numbers, including zero. Let $M$ be a complete smooth Riemannian manifold of dimension $d$, equipped
with the Borel $\sigma$-algebra $\BB$. We denote by $\operatorname{vol}$ the Riemannian volume measure on $M$.

Let $(X, \XX, \nu)$ be a probability space, where $X$ is a separable complete metric space, $\XX$ is its Borel $\sigma$-algebra, and $\nu$ is a probability measure on $\XX$. Denote by $(\Omega,\Sigma,\rho_\nu)$ the probability space given by the product of countably many copies of $(X, \XX, \nu)$. Namely, $\Omega$ is the space of sequences 
\[
\Omega = X^{\Nn} = \{(\omega_n)_{n \in \Nn} \colon \omega_n \in X \text{ for all } n \in  \Nn\},
\] 
equipped with the product $\sigma$-algebra $\Sigma = \XX^{\Nn}$ and the product measure $\rho_\nu = \nu^{\Nn}$. The shift map $\sigma \colon \Omega \to \Omega$ defined by
\[(\sigma(\omega))_n = \omega_{n+1} \quad \text{for every } \omega \in \Omega,\] 
is measurable and preserves the probability measure $\rho_\nu$.

Since $X$ and $M$ are separable complete metric spaces, the same is true of $X\times M$, $\Omega$ and $\Omega\times M$. All these spaces are endowed
with their Borel $\sigma$-algebras.




Let $f \colon X \times M \to M$ be a measurable map. For every $x \in X$, define 
\[
f_x(\cdot)=f(x,\cdot) \colon M \to M.
\]

\begin{defn}
We call the skew-product $F \colon \Omega \times M \to \Omega \times M$ defined by
\[
F(\omega,y)=(\sigma(\omega),f_{\omega_0}(y)) \quad  \text{for every } (\omega,y) \in \Omega \times M,
\]  
the \emph{random dynamical system on $M$ generated by $f$ and $\nu$}.
\end{defn}

Note that $F$ depends on $\omega$ only through its 0th component $\omega_0$. For every $n \in \Nn$ and every $ \omega \in \Omega$, define $F^{n}_\omega \colon M \to M$ as follows: 
\[
F^{n}_\omega = 
\begin{cases}
\operatorname{id}_M & \text{if } n=0, \\
f_{\omega_{n-1}} \circ \cdots \circ f_{\omega_0}  & \text{if } n \ge 1.
\end{cases}
\]
The iterates of $F$ can be written as 
\[
F^{n}(\omega,y)=(\sigma^n(\omega),F^{n}_{\omega}(y)) \quad  \text{for every } (\omega,y) \in \Omega \times M.
\]

\begin{defn}
Let $y \in M$. For every $\omega \in \Omega$, the sequence $\{F^n_{\omega}(y)\}_{n \ge 0}$ is called the \emph{random orbit of $y$ associated with $\omega$}.
\end{defn}

We now formulate the standing assumptions on $f$ and $\nu$ that will be assumed throughout this paper.

\begin{assumption}\label{as:one}
There exists a Borel probability measure $m$ on $M$ that is $f_x$-invariant for every $x\in X$. Moreover, for every $x\in X$, there exists an open set $N_x\subset M$ with
$m(N_x)=1$ such that
\[
    f_x|_{N_x}\colon N_x\to f_x(N_x)\subset M
\]
is a $C^1$ diffeomorphism. Finally, the set
\[
    \NN:=\{(\omega,y)\in\Omega\times M:\ y\in N_{\omega_0}\}
\]
is measurable in $\Omega\times M$.
\end{assumption}


\begin{remark}
Assumption~\ref{as:one} allows the maps $f_x$ to have singular points, as in the case of billiard maps. We call a point $y\in M$ singular for $f_x$ if $f_x$ is not a $C^1$ local diffeomorphism at $y$. Since
$f_x|_{N_x}$ is a $C^1$ diffeomorphism, every singular point of $f_x$ belongs to $M\setminus N_x$. Moreover, the invariance of $m$ under $f_x$ for every $x\in X$ implies that $\rho_\nu\times m$ is invariant under the skew product $F$. Finally, since $m(N_x)=1$ for every $x\in X$, Fubini's theorem gives $(\rho_\nu\times m)(\NN)=1$.
\end{remark}

We fix a global measurable trivialization $\Psi\colon TM\to M\times\Rr^d$ of the tangent bundle such that for each $y\in M$, the restriction
\[
\Psi_y:=\Psi|_{T_yM}\colon T_yM\to\Rr^d
\]
is a linear isometry from $T_yM$, endowed with the inner product induced by the Riemannian metric, onto $\Rr^d$, endowed with its standard Euclidean inner product; see \cite[Lemma~4.2.4]{Arnold:98}.

Define $A\colon \Omega\times M\to \operatorname{GL}(d,\Rr)$ by
\[
A(\omega,y)
=
\begin{cases}
\Psi_{f_{\omega_0}(y)}\circ Df_{\omega_0}(y)\circ \Psi_y^{-1}
& \text{if } (\omega,y)\in \NN,\\[2mm]
I
& \text{if } (\omega,y)\in (\Omega\times M)\setminus \NN,
\end{cases}
\]
where $I$ denotes the identity matrix in $\operatorname{GL}(d,\Rr)$.

On $\NN$, the matrix $A(\omega,y)$ represents the map $Df_{\omega_0}(y)$ with respect to the isometric trivialization $\Psi$. Therefore
\[
\|A(\omega,y)\|
=
\|Df_{\omega_0}(y)\|_{y,f_{\omega_0}(y)}
\qquad\text{for all }(\omega,y)\in\NN,
\]
where the norm on the left-hand side is the Euclidean operator norm on $\Rr^d$, while the norm on the right-hand side is the Riemannian operator norm from $T_yM$ to $T_{f_{\omega_0}(y)}M$.

Since $\NN$ is measurable, $\Psi$ is measurable, and
$(\omega,y)\mapsto Df_{\omega_0}(y)$ is measurable on $\NN$, the map $A$ is measurable on $\Omega\times M$. We set $A(\omega,y)=I$ on
$(\Omega\times M)\setminus\NN$ only to obtain a measurable map defined everywhere.

Set $A^0(\omega,y)=I$. For $n\ge 1$, define
\[
A^n(\omega,y)
=
A(F^{n-1}(\omega,y))\cdots A(F(\omega,y))A(\omega,y).
\]

Note that the choice of $A$ on $(\Omega\times M)\setminus\NN$ does not affect
$A^n(\omega,y)$ on the full $\rho_\nu\times m$-measure set of points whose forward $F$-orbit stays in $\NN$.

\begin{defn}
The maximal and minimal Lyapunov exponents of $F$ at $(\omega,y) \in \Omega \times M$ are defined by
\[
\lambda_F^+(\omega,y) = \limsup_{n\to+\infty} \frac1n\log\|A^n(\omega,y)\|
\]
and
\[
\lambda_F^-(\omega,y)
= -\limsup_{n\to+\infty}
\frac1n\log\|(A^n(\omega,y))^{-1}\|.
\]
\end{defn}

\begin{assumption}\label{as:three}
$\log^+\|A\|,\ \log^+\|A^{-1}\|
    \in L^1(\rho_\nu\times m)$.
\end{assumption}

\begin{remark}
By Assumption~\ref{as:three} and the Subadditive Ergodic Theorem, the limsup in the definitions of $\lambda_F^+(\omega,y)$ and $\lambda_F^-(\omega,y)$ can be replaced by the limit for $\rho_\nu\times m$-a.e. $(\omega,y)\in\Omega\times M$. Since the Lyapunov exponent are $F$-invariant, ergodicity of
$\rho_\nu\times m$ with respect to $F$ implies that both exponents are constant $\rho_\nu\times m$-a.e.
\end{remark}

Since $(\rho_{\nu} \times m)(\NN)=1$, the set of points $(\omega,y)$ such that
\[
F^n(\omega,y) \in \NN
\qquad\text{for all } n\ge 0
\]
has full $\rho_\nu\times m$-measure. On this set, $A^n(\omega,y)$ is the matrix representation of $DF_\omega^n(y)$ with respect to $\Psi$. Hence for $\rho_\nu\times m$-a.e. $(\omega,y)$, 
\[
\lambda_F^+(\omega,y)
=
\lim_{n\to+\infty}
\frac1n\log\|DF_\omega^n(y)\|_{y,F_\omega^n(y)}
\]
and
\[
\lambda_F^-(\omega,y)
=
-\lim_{n\to+\infty}
\frac1n\log\|(DF_\omega^n(y))^{-1}\|_{F_\omega^n(y),y}.
\]

\subsection{Random additive perturbations of toral maps}
\label{random additive}

We now specialize to random dynamical systems for which both the base space
and the fiber are the $d$-dimensional torus
\[
X=M=\Tt^d=\Rr^d/\Zz^d.
\]
We endow $\Tt^d$ with its standard flat metric and denote by $m$ its normalized Haar measure.

For brevity, throughout this paper the term \emph{toral map} will be used in the specialized sense specified in the following definition. In particular, it will always refer to a map preserving $m$ and satisfying the stated regularity and integrability assumptions.

\begin{defn}
\label{def:toral_map}
A measurable map $g\colon\Tt^d\to\Tt^d$ is called a \emph{toral map} if it satisfies the following properties:
\begin{enumerate}
\item $g$ preserves the Haar measure $m$, 
\item there exists an open set $N\subset\Tt^d$ with $m(N)=1$ such that $g|_N\colon N\to g(N) \subset \Tt^d$
is a $C^1$ diffeomorphism,
\item the functions $1_N(y)\log^+\|(Dg(y))^{\pm 1}\|$  belong to $L^1(m)$, where they are understood to vanish on $\Tt^d\setminus N$ and the norm is the operator norm induced by the
standard flat metric on $\Tt^d$. 
\end{enumerate}
\end{defn}

\begin{defn}
Let $g\colon\Tt^d\to\Tt^d$ be a toral. A map
$f\colon\Tt^d\times\Tt^d\to\Tt^d$ of the form
\[
f(x,y)=\tau_x\circ g(y)=g(y)+x,
\qquad x,y\in\Tt^d,
\]
where $\tau_x$ denotes translation by $x$ on $\Tt^d$, is called an
\emph{additive perturbation} of $g$. The random dynamical system generated by such a map $f$ and by a probability measure $\nu$ on $\Tt^d$ is called a \emph{$\nu$-random additive
perturbation of $g$}. In this setting, $\Omega=(\Tt^d)^{\Nn}$ and $\rho_\nu=\nu^{\Nn}$.
\end{defn}




Random additive perturbations toral maps provide examples of systems satisfying Assumptions~\ref{as:one} and~\ref{as:three}.

\begin{lemma}
\label{le:add-pert}
Every $\nu$-random additive perturbation $F$ of a toral map satisfies Assumptions~\ref{as:one} and~\ref{as:three}. Moreover, for such a system, 
the condition $\lambda_F^-(\omega,y)=\lambda_F^+(\omega,y)$ for $\rho_\nu\times m$-a.e. $(\omega,y)$ is equivalent to the condition $\lambda_F^+(\omega,y)=0$ for $\rho_\nu\times m$-a.e. $(\omega,y)$.
\end{lemma}

\begin{proof}
Translations of $\Tt^d$ are diffeomorphisms preserving the Haar measure $m$. Hence, for every $x\in\Tt^d$, the map $f_x=\tau_x\circ g$ preserves $m$, and $f_x|_N\colon N\to f_x(N)=\tau_x(g(N))$ is a $C^1$ diffeomorphism. Moreover, taking $N_x=N$ for every $x\in\Tt^d$, we have
$\NN=\Omega\times N$, which is measurable. Thus $F$ satisfies Assumption~\ref{as:one}.

Since the differential of a translation is equal to the identity in the standard flat trivialization, we have 
\[
Df_x(y)=Dg(y) \qquad \text{for all } (x,y) \in\Tt^d \times N.
\]
Thus $\|(A(\omega,y))^{\pm 1}\|=\|(Dg(y))^{\pm 1}\|$ on $\NN$. Since $A(\omega,y)=I$ on $(\Omega \times \Tt^d) \setminus \NN$, the integrability assumptions on $(Dg)^{\pm 1}$ imply that $\log^+ \|A^{\pm 1}\| \in L^{1}(\rho_{\nu} \times m)$. Therefore $F$ satisfies Assumption~\ref{as:three}.

Furthermore, $|\det A^n(\omega,y)|=1$
for every $n\geq1$ and for $\rho_\nu\times m$-a.e.\ $(\omega,y)$. By Oseledets' Theorem,
\[
\lim_{n\to\infty}\frac1n\log|\det A^n(\omega,y)|
\]
is equal to the sum of the Lyapunov exponents of $F$, counted with multiplicity. Since the left-hand side is zero, this sum vanishes for $\rho_\nu\times m$-a.e.\ $(\omega,y)$.

If $\lambda_F^-=\lambda_F^+$ almost everywhere, then all Lyapunov exponents coincide almost everywhere, since every exponent lies between the minimal
and maximal ones. Their sum being zero, they must all vanish, and in particular $\lambda_F^+=0$ almost everywhere. The converse is immediate
from $\lambda_F^- \le \lambda_F^+$ and the vanishing of the sum of the Lyapunov exponents.
\end{proof}



\subsection{Invariance principle}
In this section, we first recall a theorem of Ledrappier that generalizes Furstenberg's classical result on zero Lyapunov exponents for products of independent unimodular matrices~\cite{Furstenberg:1963uq} to linear cocycles over Markov chains. Similar results have been obtained in \cite{Royer:1980vh,Virtser:80,Guivarch:79}. Then we apply Ledrappier's theorem to random maps.

\subsubsection{Ledrappier's Invariance Theorem}

Let $Z$ be a separable complete metric space with its Borel $\sigma$-algebra $\ZZ$. Consider a Markov chain $\{Z_n\}_{n \ge \Nn}$ with state space $(Z,\ZZ)$, transition probability $\{P_z \colon z \in Z\}$ and stationary probability measure $\zeta$. 

Let $(\Omega',\Sigma')$ be the two-sided product space $\Omega'=Z^{\Zz}$ endowed with the product $\sigma$-algebra $\Sigma' := \ZZ^{\Zz}$. For each $n \in \Zz$, let $\pi_n \colon \Omega' \to Z$ be the projection given by $\pi_n(\omega')=\omega'_n$ for all $\omega'=(\omega'_n)_{n \in \Zz} \in \Omega'$. Since  $Z$ is a separable complete metric space, there exists a probability measure $\Pp_{\zeta}$ on $\Sigma'$ such that the coordinate process $\{\pi_n\}_{n \in \Zz}$ is a Markov chain on $(\Omega',\Sigma',\Pp_{\zeta})$ with transition probability $P_z$ and stationary probability measure $\zeta$. Moreover, the shift map $\sigma' \colon \Omega' \to \Omega'$ 
is measurable with respect to $\Sigma'$ and preserves $\Pp_{\zeta}$  (see \cite[Theorem 3.1.7 and Example 5.1.3]{Douc:2018wc}).

Let $A \colon Z \to \operatorname{GL}(d,\Rr)$ be a measurable map such $\log^+ \|A(\cdot)^{\pm 1}\|$ belong to $L^1(\zeta)$. Consider the linear cocycle over $(\Omega',\Sigma',\Pp_{\zeta},\sigma')$ generated by $A$, and denote by $\Lambda^+_A(\omega')$ and $\Lambda^-_A(\omega')$ its maximal and minimal Lyapunov exponents, respectively \cite{Ledrappier:1986ue}. 

By abuse of notation, we also denote by $A(z)$ the action induced by the matrix $A(z)$ on the projective space $\Pp^{d-1}$.

A family of probability measures $\{\eta_z : z \in Z\}$ on $\mathbb{P}^{d-1}$ is said to be \emph{measurable} if for every Borel set $B \subset \mathbb{P}^{d-1}$, the map
$z \mapsto \eta_z(B)$ is $\mathcal Z$-measurable. Since $\mathbb{P}^{d-1}$ is a compact metrizable space, this condition is equivalent to requiring that the map
$z \mapsto \eta_z$ is measurable as a map from $(Z,\mathcal Z)$ into the space of probability measures on $\mathbb{P}^{d-1}$ endowed with the Borel $\sigma$-algebra of the weak-* topology.

\begin{theorem}[{\cite[Corollary 2]{Ledrappier:1986ue}}]
\label{thm: Ledrappier}
Suppose that 
\[
\Lambda^+_A(\omega') = \Lambda^-_A(\omega') \quad \text{for } \Pp_{\zeta} \text{-a.e. } \omega' \in \Omega'.
\] 
Then there exists a measurable family $\{\eta_z : z \in Z\}$ of probability measures on $\mathbb{P}^{d-1}$ such that for $\zeta$-a.e. $z \in Z$, 
\[
A(z)_{*} \eta_z = \eta_{z_1} \quad \text{for } P_z  \text{-a.e. } z_1 \in Z.
\]
\end{theorem}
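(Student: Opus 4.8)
The plan is to realise the family $\{\eta_z\}$ as the fibrewise disintegration of an invariant measure for the projectivised cocycle, and to extract its invariance from the coincidence of the extremal exponents. I would pass to the two-sided space $\Omega'$ and consider the projective skew-product $\hat\sigma'(\omega',\xi)=(\sigma'\omega',A(\pi_0(\omega'))\xi)$ on $\Omega'\times\mathbb{P}^{d-1}$, seeking an $\hat\sigma'$-invariant probability measure $\hat m$ that projects to $\Pp_\zeta$. Existence of a stationary object comes for free from compactness: the set of measurable families of probability measures on the compact space $\mathbb{P}^{d-1}$ indexed by $(Z,\zeta)$, with fibrewise weak-$*$ topology, is compact and convex, and the projective Markov operator determined by $\{P_z\}$ and $A$ acts on it continuously; a Krylov--Bogolyubov argument then produces a stationary family satisfying the fibrewise balance relation. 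Lifting it to $\Omega'$ yields $\hat m$, whose disintegration $\{\hat m_{\omega'}\}$ may be chosen measurable with respect to the past, and conditioning on $\pi_0=z$ gives the candidate $\{\eta_z\}$.

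The analytic core is a Furstenberg-type formula linking exponents to stationary measures. For any stationary family, integrating $\log(\|A(z)\hat v\|/\|\hat v\|)$ against the fibre measures and $\zeta$ gives a number bounded above by $\Lambda^+_A$, with equality for the distinguished past-measurable stationary family that records the most-expanded direction. Applying the same construction to the inverse cocycle, which reverses time, gives a dual formula in which the future-measurable stationary family computes $-\Lambda^-_A$. Thus the two extremal exponents are encoded, respectively, by the past- and future-conditioned versions of the fibre measures.

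I would then use the hypothesis $\Lambda^+_A=\Lambda^-_A$ to collapse these two descriptions. Equality of the extremal exponents forces a non-negative relative-entropy functional, measuring how much the fibre measure depends on the past beyond the present state $\pi_0$, to vanish; equivalently, the martingale of conditional measures is constant in time. Its vanishing is exactly the assertion that $A(z)_*\eta_z=\eta_{z_1}$ for $P_z$-a.e. $z_1$, so the stationary family is genuinely invariant rather than merely stationary. The required measurability of $z\mapsto\eta_z$ is inherited from the disintegration and, since $\mathbb{P}^{d-1}$ is compact metrizable, is equivalent to Borel measurability into the space of probability measures with the weak-$*$ topology.

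The main obstacle is the last step. Because only the top and bottom exponents are assumed to coincide, one cannot invoke Furstenberg contraction --- which needs a genuine spectral gap --- to identify the limiting measures as Dirac masses; instead the whole cocycle must be controlled through the simultaneous convergence of the forward and backward martingales of conditional measures, showing that the coincidence of the extremal exponents alone suffices to annihilate the information functional. This is precisely the point where Ledrappier's sharpening of Furstenberg's theory, combined with the martingale convergence underlying Oseledets' theorem, is needed.
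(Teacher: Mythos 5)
The paper does not prove this statement at all: it is imported verbatim as Corollary~2 of Ledrappier's 1986 article, so there is no internal argument to compare yours against. Judged on its own terms, your sketch is a faithful reconstruction of Ledrappier's actual strategy --- build a stationary family on $Z\times\mathbb{P}^{d-1}$ by compactness of the set of measures with marginal $\zeta$, pass to the two-sided extension, compare the past-measurable and future-measurable disintegrations via a Furstenberg-type integral formula for $\Lambda^+_A$ and $\Lambda^-_A$, and use a relative-entropy (martingale) functional to upgrade the averaged stationarity relation to the pointwise invariance $A(z)_*\eta_z=\eta_{z_1}$ for $P_z$-a.e.\ $z_1$. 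You also correctly identify that the conclusion is strictly stronger than stationarity and that the Markov structure is what lets the conditional measures depend only on the present coordinate.

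The genuine gap is the one you name yourself in the final paragraph. Everything up to that point is soft: existence of a stationary family is Krylov--Bogolyubov, and the inequality $\int\!\!\int\log(\|A(z)v\|/\|v\|)\,d\eta_z\,d\zeta\le\Lambda^+_A$ for an arbitrary stationary family follows from Jensen and subadditivity. The entire content of the theorem is the quantitative statement that the nonnegative entropy defect --- the average relative entropy of $\eta_{z_1}$ with respect to $A(z)_*\eta_z$, equivalently the extra dependence of the conditional measures on the past beyond $\pi_0$ --- is controlled by $\Lambda^+_A-\Lambda^-_A$, so that coincidence of the two extremal exponents (with nothing assumed about the intermediate ones) forces it to vanish. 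You assert this inequality but do not derive it; you explicitly defer it to ``Ledrappier's sharpening of Furstenberg's theory,'' i.e.\ to the result being proved. As a citation-level justification this matches what the paper does; as a proof it is circular at exactly the step where all the difficulty is concentrated.
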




\subsubsection{Invariant principle for random maps}

We now establish a result (Theorem~\ref{thm:G}) that may be regarded as a generalization of Furstenberg’s theorem to random maps satisfying Assumptions~\ref{as:one} and~\ref{as:three}. Related results for random diffeomorphisms on manifolds were previously obtained by Carverhill~\cite{carverhill1987furstenberg} and Baxendale~\cite{Baxendale:1989aa}.

Denote by $PM = \bigsqcup_{y \in M} P_yM$ the projective bundle of the manifold $M$, where $P_yM$ is the projective space of $T_xM$. Let $\pi \colon PM \to M$ be the bundle projection. 

Any probability measure $\eta$ on $PM$ such that $\pi_* \eta = m$ admits a \emph{disintegration} with respect to $m$ (see \cite{Viana:2014uo}), i.e. a measurable family $\{\eta_y \colon y \in M\}$ of probability measures on $PM$ such that each $\eta_y$ is concentrated on the fiber $P_yM $ and   
\[
\eta(E) = \int_M \eta_y(E \cap P_y M) \, dm(y) \quad \text{for every measurable } E \subset PM.
\]  

We use the same notation $Df_x(y)$ for the differential of $f_x$ at $y$ and for the map it induces on the projective fiber $P_yM$.


\begin{theorem}
\label{thm:G}
Let $F \colon \Omega \times M \to \Omega \times M$ be a random dynamical system generated by $f$ and $\nu$ that satisfies Assumptions~\ref{as:one} and \ref{as:three}. Suppose that
\[
\lambda^+_F(\omega,y)=\lambda^-_F(\omega,y) \quad \text{for } \rho_\nu \times m \text{-a.e. } (\omega,y) \in \Omega \times M.
\] 
Then there exists a probability measure $\eta$ on $PM$ with $\pi_* \eta = m$ such that 
\[
(Df_x)_* \eta = \eta \quad \text{for } \nu \text{-a.e. } x \in X.
\]
In particular, if $\{\eta_y : y \in M\}$ denotes the disintegration of $\eta$ with respect to $m$, then 
\[
Df_x
(y)_* \eta_y = \eta_{f_x(y)} \quad \text{for } (\nu \times m) \text{-a.e. } (x,y) \in X \times M.
\]
\end{theorem}

\begin{proof}
The argument below follows closely the proof of \cite[Theorem 1]{carverhill1987furstenberg}.

Consider the Markov chain $\{(X_n,Y_n) \colon n \in \Nn\}$ on $X \times M$ and transition probabilities given by
\[
P_{(x,y)}(B \times C) = 	\nu(B) \delta_{f_x(y)}(C), \quad  B \in \XX, C \in \BB.
\]
for all $x \in X, y \in M$.
This means that $\{X_n\}$ is a sequence of i.i.d. random variables with values in $X$ and distribution $\nu$, whereas $\{Y_n\}$ is a Markov chain with state space $M$ satisfying the recursive relation   
\[
Y_{n+1}=f_{X_n}(Y_n).
\]
It is straightforward to check that $\mu:=\nu \times m$ is a stationary probability for $\{(X_n,Y_n)\}$.

Let $A\colon\Omega\times M\to\operatorname{GL}(d,\Rr)$ be the extension, defined after Assumption~\ref{as:one}, of the matrix representation of the derivative cocycle. Since $A(\omega,y)$ depends on $\omega$ only through its zeroth coordinate $\omega_0$, the map
\[
A' \colon X\times M\to\operatorname{GL}(d,\Rr),
\qquad
A'(x,y)=A(\omega,y),
\]
where $\omega\in\Omega$ is any sequence with $\omega_0=x$ is well defined.

Under the hypotheses of the theorem, we may apply Theorem~\ref{thm: Ledrappier}
to the Markov chain $\{(X_n,Y_n)\}$ and the matrix function $A'$. This yields a measurable family of probability measures on $\mathbb{P}^{d-1}$,
\[
\{\bar{\eta}_{(x,y)} : (x,y) \in X \times M\},
\]
and a measurable set $U \subset X \times M$ with $(\nu \times m)(U)=1$ such that for every $(x,y) \in U$,
\[
\bar{\eta}_{(x_1,y_1)}
=
Df_x(y)_{*}\,\bar{\eta}_{(x,y)}
\quad
\text{for $P_{(x,y)}$-a.e.\ $(x_1,y_1) \in X \times M$.}
\]
Using the definition of $P_{(x,y)}$, this condition can be rewritten as follows:
for every $(x,y) \in U$,
\begin{equation}
\label{eqn: 1}
\bar{\eta}_{(x_1,f_x(y))}
=
Df_x(y)_{*}\,\bar{\eta}_{(x,y)}
\quad
\text{for $\nu$-a.e.\ $x_1 \in X$.}
\end{equation}
It follows that for each $(x,y) \in U$, the measure $\bar{\eta}_{(x_1,f_x(y))}$ is independent of $x_1$ for $\nu$-almost every $x_1 \in X$.

We now introduce another measurable family $\{\eta_y \colon y \in M\}$ of probability measures on $\Pp^{d-1}$ defined by 
\[
\eta_y(B) = \int_X \bar{\eta}_{(x,y)}(B) \,d \nu(x) 
\]
for every  $y \in M$ and every measurable set $B \subset \Pp^{d-1}$.  Then Property~\eqref{eqn: 1} implies that $Df_x(y)_* \bar{\eta}_{(x,y)} = \eta_{f_x(y)}$ for every $(x,y) \in U$.

Let 
\[
V = \left\{ (x,y) \in X \times M : \bar{\eta}_{(x,y)} = \eta_y \right\}.
\]
This set is measurable, since $(x,y) \mapsto \bar{\eta}_{(x,y)}$ and $(x,y) \mapsto \eta_y$ are measurable maps. 

Next, we show that $(\nu \times m)(V)=1$. For every $x \in X$, let
\[
U_x = \left \{ y \in M : (x,y) \in U\right \}
\]
be the $x$-section of $U$. Since $(\nu \times m)(U)=1$,  Fubini's Theorem implies that there exists $\bar{x} \in X$ such $m(U_{\bar{x}})=1$. By hypothesis, the map $f_{\bar{x}} \colon N_{\bar{x}} \to M$ is an $m$-preserving $C^1$ embedding and $m(N_x)=1$. Therefore, the set $M_{\bar{x}}:= f_{\bar{x}}(U_{\bar{x}} \cap N_{\bar{x}})$ is measurable and $m(M_{\bar{x}})=1$. By the definition of $U_{\bar{x}}$ and Property~\eqref{eqn: 1}, if $y \in M_{\bar{x}}$, then $(x,y) \in V$ for $\nu$-a.e. $x \in X$. Equivalently, 
\[
\nu(V^y)=1 \quad \text{for every } y \in M_{\bar{x}},
\]
where $V^y = \{x \in X : (x,y) \in V\}$ is the $y$-section of $V$. Finally, by Fubini's Theorem,  we obtain
\[
(\nu \times m)(V) = \int_M \nu(V^y) dm(y) \ge  \int_{M_{\bar{x}}} \nu(V^y) dm(y)  =1.
\]

Let $W = U \cap V$. Then $W$ is measurable and satisfies $(\nu \times m)(W)=1$. Take $(x,y) \in W$. Since $(x,y) \in U$, we have $Df_x(y)_* \bar{\eta}_{(x,y)}= \eta_{f_x(y)}$. On the other hand, $(x,y) \in V$ implies that $\bar{\eta}_{(x,y)} = \eta_y$. Combining these identities, we obtain $Df_x(y)_* \eta_y= \eta_{f_x(y)}$.

To complete the proof, consider the probability measure $\eta$ on $PM$ whose disintegration over $m$ is given by $\{\eta_y : y \in M\}$. By the conclusion above, $\eta$ satisfies $(Df_x)_* \eta = \eta$ for $\nu$-a.e. $x \in X$.
%
%
\end{proof}

\subsubsection{Random additive perturbations of toral maps}
\label{Consequences}

By Lemma~\ref{le:add-pert}, Theorem~\ref{thm:G} applies to every $\nu$-random additive perturbation $F$ of a toral map $g$. Moreover, for such a system, the condition $\lambda_F^+=\lambda_F^-$ almost everywhere is equivalent to $\lambda_F^+=0$ almost everywhere. Since the projective tangent bundle of $\Tt^d$ is trivial, $P\Tt^d\cong \Tt^d\times\Pp^{d-1}$,
Theorem~\ref{thm:G} immediately yields the following corollary.

\begin{corollary}
\label{cor:H}
Let $F$ be $\nu$-random additive perturbation of a toral map $g$. Suppose that 
\[
\lambda^+_F(\omega,y)=0 \quad \text{for } (\rho_\nu \times m) \text{-a.e. } (\omega,y) \in \Omega \times \Tt^d.
\]
 Then there exists a measurable family of probability measures $\{\eta_y : y \in \Tt^d\}$ on $\Pp^{d-1}$ such that for $m$-a.e. $y \in M$, 
\[
Dg(y)_* \eta_y = \eta_{g(y)+x} \quad \text{for } \nu \text{-a.e. } x \in \Tt^d.
\]
\end{corollary}


We now examine some consequences of Corollary~\ref{cor:H} under the additional assumption that $\nu$ is absolutely continuous with respect to
$\operatorname{vol}$.

Recall that $\SL^{\pm}(d,\Rr)$ denotes the set of all real $d\times d$ matrices with determinant $\pm1$. Also denote by $\SL(d,\Rr)$ the set of those with determinant
$1$. Let $\mu$ be the push-forward of $m|_N$ under the map $Dg\colon N\to \SL^{\pm}(d,\Rr)$. Let $H\subset \SL^{\pm}(d,\Rr)$ denote the support of $\mu$. For $h\in H$, we use the same symbol to denote both the matrix $h$ and the induced projective transformation of $\Pp^{d-1}$.


\begin{theorem}
\label{thm:invariant}
Let $F$ be a $\nu$-random additive perturbation of a toral map $g$.
Suppose that $\nu\ll m$ and that $\lambda_F^+(\omega,y)=0$ for $(\rho_\nu\times m)$-a.e. $(\omega,y)\in\Omega\times\Tt^d$. Then there exists a probability measure $\eta$ on $\mathbb P^{d-1}$ such that
\[
h_*\eta=\eta
\quad\text{for all }h\in H.
\]
Moreover, one of the following alternatives holds:
\begin{enumerate}
\item $H$ is contained in a compact subgroup of $\SL^{\pm}(d,\Rr)$,
\item there exists a nonempty finite family $L$ of proper subspaces of $\Rr^d$
such that $h(L)=L$ for every $h\in H$.
\end{enumerate}
\end{theorem}

\begin{proof}
Let $p=d\nu/dm \in L^1(m)$, and set $E=\{x\in\Tt^d:p(x)>0\}$. We have $m(E)>0$. By Corollary~\ref{cor:H}, there exists a measurable
family of probability measures $\{\eta_y:y\in\Tt^d\}$ on $\mathbb P^{d-1}$ and a full $m$-measure set $U\subset N$ such that for every $y\in U$,
\[
Dg(y)_*\eta_y=\eta_{g(y)+x}
\quad\text{for }m\text{-a.e. }x\in E.
\]

Let $V=g(U)$. Since $g|_N$ is a $C^1$ embedding preserving $m$ and $U$ has full $m$-measure, the set $V$ has full $m$-measure. Therefore, for every
$v = g(y) \in V$, we have
\[
Dg(y)_*\eta_y = \eta_{v+x} \quad\text{for }m\text{-a.e. }x\in E.
\]
This implies that the map $w\mapsto\eta_w$ is constant $m$-a.e. on $v+E$. Denote this constant measure by $\zeta_v$.

We now prove that these measures $\zeta_v$ are equal for all $v \in V$. Define
\[
D=\{t\in\Tt^d:m(E\cap(E+t))>0\}.
\]
Since $m(E)>0$, Steinhaus' Theorem~\cite[Corollary 20.17]{hewittross} implies that $D$ contains a neighborhood
$W$ of $0$. If $v,v'\in V$ and $v'-v\in W$, then
\[
m\bigl((v+E)\cap(v'+E)\bigr)
=
m\bigl(E\cap(E+v'-v)\bigr)>0.
\]
Since $w\mapsto\eta_w$ is constant $m$-a.e. on each of the two translates
$v+E$ and $v'+E$, the corresponding constants must coincide.

Choose a symmetric neighborhood $W_0$ of $0$ such that $W_0\subset W$. Since $\Tt^d$ is connected and compact, there exists $N_0\ge 1$ such that
every element of $\Tt^d$ can be written as a sum of $N_0$ elements of $W_0$.
Let $v,v'\in V$, and define $\mathcal C(v,v')$ as the set of tuples
\[
(v_1,\ldots,v_{N_0-1})\in(\Tt^d)^{N_0-1}
\]
such that
\[
v_1-v\in W_0,\quad v_2-v_1\in W_0,\quad \ldots,\quad v'-v_{N_0-1}\in W_0.
\]
Then $\mathcal C(v,v')$ is a nonempty open subset of $(\Tt^d)^{N_0-1}$ and intersects $V^{N_0-1}$, since $V$ has full $m$-measure. Hence any two points $v,v'\in V$ can
be joined by a finite chain
\[
v=v_0,v_1,\ldots,v_{N_0}=v',
\qquad v_i\in V,
\]
with
\[
v_{i+1}-v_i\in W_0
\quad\text{for every }i=0,\ldots,N_0-1.
\]
It follows that $\zeta_v = \zeta_{v'}$ for all $v,v' \in V$. Hence there exists a probability measure $\eta$ on $\mathbb P^{d-1}$ such that for every $v\in V$,
\[
\eta_w=\eta \quad\text{for }m\text{-a.e. }w\in v+E.
\]

Let $A=\{w\in\Tt^d:\eta_w\neq\eta\}$. It is straightforward to see that $A$ is measurable. By the previous conclusion, 
\[
m(A\cap(v+E))=0 \quad \text{for every } v\in V.
\]
By Fubini's Theorem,
\[
0 = \int_V m(A\cap(v+E))\,dm(v) = \int_A m\bigl(V\cap(w-E)\bigr)\,dm(w).
\]
Since $V$ has full $m$-measure, $m(V\cap(w-E))=m(E)>0$ for every $w\in\Tt^d$. Therefore $m(A)=0$, and so
\[
\eta_w=\eta
\quad\text{for }m\text{-a.e. }w\in\Tt^d.
\]

We can then conclude that
\[
Dg(y)_*\eta=\eta
\quad\text{for }m\text{-a.e. }y\in\Tt^d,
\]
or equivalently
\[
A_*\eta=\eta
\quad\text{for }\mu\text{-a.e. }A\in\SL^{\pm}(d,\Rr).
\]
Since the stabilizer of $\eta$ is closed, this identity holds for every
$A\in\supp\mu=H$. Hence
\[
h_*\eta=\eta
\quad\text{for every }h\in H.
\]

The final alternative follows by applying~\cite[Corollary~3.2.2]{zimmer} to the $H$-invariant probability measure $\eta$ on $\mathbb P^{d-1}$ (see also~\cite[Section~7.3]{Viana:2014uo}). Therefore either
$H$ is contained in a compact subgroup of $\SL^{\pm}(d,\Rr)$, or there exists a
nonempty finite family $L$ of proper subspaces of $\Rr^d$ such that
\[
h(L)=L
\quad\text{for every }h\in H.
\]
This proves the proposition.
\end{proof}

\begin{remark} \label{remark:hyp-ell}
The failure of Conditions (1) and (2) in the conclusion of Theorem~\ref{thm:invariant} implies that $\lambda^+_F > 0$ on a set of positive $(\rho_\nu \times m)$-measure. This provides a criterion for the positivity of the maximal Lyapunov exponent of $F$ analogous to Furstenberg's criterion for random matrices~\cite[Section~7.3]{Viana:2014uo}.
\end{remark}

When $d=2$, the last part of Theorem~\ref{thm:invariant} reduces to the following characterization (see~\cite[Section 6.3]{Viana:2014uo}):  
\begin{enumerate}
    \item $H$ is contained in a compact subgroup of $\SL^{\pm}(2,\mathbb{R})$, or  
    \item there exists a nonempty set $L\subset\mathbb{R}^2$ consisting of one or two $1$-dimensional subspaces such that $h(L)=L$ for every $h\in H$.
\end{enumerate}  
The second case can be further refined into two complementary possibilities:  
\begin{itemize}
    \item $L$ consists of either one or two invariant $1$-dimensional subspaces,
    \item $L$ consists of two $1$-dimensional subspaces interchanged by some $h \in H$.
\end{itemize}


The following corollary gives a simple condition ensuring that $\lambda_F^{+}>0$ on a set of positive measure when $d=2$. It will be used in Section~\ref{sec:random_standard} to establish the positivity of $\lambda_F^{+}$ when $F$ is a random additive perturbation of a generalized standard map.

\begin{corollary}
\label{cor:simple}
Assume that the hypotheses of Theorem~\ref{thm:invariant} hold and that $d=2$. Suppose there exist elements $h_1, h_2 \in H \cap \SL(2,\Rr)$ such that $h_1 \neq \pm I$, $|\tr h_1| \ge 2$ and $0 < |\tr h_2| < 2$. 
Then $\lambda^+_F(\omega,y)>0$ on a set of positive $(\rho_\nu \times m)$-measure.
\end{corollary}

\begin{proof}
Since $h_1\neq \pm I$ and $|\operatorname{tr} h_1|\geq 2$, the matrix $h_1$
is either hyperbolic, or parabolic and not diagonalizable. Hence $h_1$ cannot belong to a compact subgroup of $\SL^{\pm}(2,\Rr)$, so Condition~\textup{(1)} in Theorem~\ref{thm:invariant} is violated.

Moreover, since $0<|\operatorname{tr} h_2|<2$, the matrix $h_2$ is elliptic and is not conjugate to a rotation by angle $0$, $\pi/2$, $\pi$ or $3\pi/2$. Therefore the projective action of $h_2$ has neither fixed points nor invariant two-point sets, and Condition~\textup{(2)} is also violated.

Thus neither condition in Theorem~\ref{thm:invariant} holds, and the conclusion follows from that theorem.
\end{proof}




The following corollary will be applied in Section~\ref{sec:billiards}, where we study billiard maps and encounter a sequence of matrices in $H$
with the following degenerating factorization.

\begin{corollary}
\label{cor:K}
Assume that the hypotheses of Theorem~\ref{thm:invariant} hold
and that $d=2$. Suppose that there exists a sequence $\{h_n\}_{n\geq 0}\subset H$ such that
\[
h_n=A_nD_nB_n,
\]
where
\[
A_n=
\begin{pmatrix}
1 & 0\\
0 & \alpha_n
\end{pmatrix},
\qquad
\alpha_n \xrightarrow[n\to\infty]{}0,
\]
\[
D_n=
\begin{pmatrix}
1+a_n & b+b_n\\
c_n & 1+d_n
\end{pmatrix},
\qquad
b\neq0,
\qquad
a_n,b_n,c_n,d_n \xrightarrow[n\to\infty]{}0,
\]
and
\[
B_n=\beta_n
\begin{pmatrix}
\beta_n^{-1} & 0\\
0 & 1
\end{pmatrix},
\qquad
\beta_n \xrightarrow[n\to\infty]{}+\infty.
\]
Then
\[
\eta=\delta_{\hat e},
\]
where $\hat e=[1:0]\in\Pp^1$.
\end{corollary}

\begin{proof}
Let $\widehat{h}_n \colon \mathbb P^1 \to \mathbb P^1$ denote the map induced by the action of $h_n$ on $\mathbb P^1$. It is enough to prove that
\[
(\widehat{h}_n)_*\eta \xrightarrow[n\to\infty]{\mathrm{w}*}\delta_{\hat e},
\]
Indeed, since $h_n\in H$ and $\eta$ is $H$-invariant, it follows that $\eta = \delta_{\hat e}$.

Let $[u:v]\in\Pp^1$. The projective action of $h_n=A_nD_nB_n$ is given by
\[
\widehat h_n([u:v])
=
\left[
\beta_n^{-1}(1+a_n)u+(b+b_n)v:
\alpha_n\beta_n^{-1}c_nu+\alpha_n(1+d_n)v
\right].
\]
If $v\neq0$, then 
\[
\widehat h_n([u:v])\longrightarrow [bv:0]=\hat e.
\]
If $v=0$, then $u\neq0$, and after canceling the common factor
$\beta_n^{-1}u$, we obtain
\[
\widehat h_n([u:0])
=
[1+a_n:\alpha_n c_n]
\longrightarrow \hat e.
\]
Therefore $\widehat h_n(\xi)\to\hat e$ for every $\xi\in\Pp^1$. 

By the Dominated Converge Theorem, for every continuous function $\varphi\colon\Pp^1\to\Rr$,
\[
\int_{\Pp^1}\varphi(\xi)\,d(\widehat h_{n*} \eta)(\xi)
=
\int_{\Pp^1}\varphi(\widehat h_n(\xi))\,d\eta(\xi)
\longrightarrow
\varphi(\hat e).
\]
Hence $\widehat h_{n*} \eta\to\delta_{\hat e}$ in the weak-$*$ topology. 
\end{proof}

%
%

\section{Ergodicity of random additive perturbations of toral maps}
\label{se:ergodicity}

In this section, we study stationary measures for the Markov chains generated by random additive perturbations of toral maps; see Definition~\ref{def:toral_map} for the precise meaning of toral map.

Let $g\colon\Tt^d\to\Tt^d$ be a toral map and let $\nu$ be a probability measure on $\Tt^d$. For each $x\in\Tt^d$, define
\[
f_x\colon\Tt^d\to\Tt^d,
\qquad
f_x(y)=g(y)+x.
\]
Let $(X_n){n\geq 0}$ be an i.i.d.\ sequence of $\Tt^d$-valued random variables with common distribution $\nu$. We consider the Markov chain $(Y_n){n\geq 0}$ on $\Tt^d$ defined by
\[
Y_{n+1}=f_{X_n}(Y_n)=g(Y_n)+X_n.
\]
The corresponding transition probabilities are
\[
P_y(E)=\nu\bigl(\{x\in\Tt^d:g(y)+x\in E\}\bigr),
\qquad E\subset \Tt^d \text{ measurable}.
\]
The associated Markov operator is
\[
P\varphi(y)=\int_{\Tt^d}\varphi(g(y)+x)\,d\nu(x)
\]
for every bounded Borel function $\varphi\colon\Tt^d\to\Rr$.

We denote by $P^*$ the dual operator acting on probability measures on $\Tt^d$. A probability measure $\mu$ is called $P$-stationary if $P^*\mu=\mu$. The Markov operator $P$ is called \emph{uniquely ergodic} if it admits a unique stationary probability measure. It is called \emph{strong Feller} if it maps
bounded Borel functions to continuous functions.

We impose additional conditions on the map $g$ and the measure $\nu$. More precisely, we consider the following two cases.

\medskip

\noindent\textbf{Case (i).}
\begin{itemize}
\item $g \colon \Tt^d \to \Tt^d$ is continuous and preserves $m$,
\item $\nu$ is absolutely continuous with respect to $m$.
\end{itemize}

\medskip

\noindent\textbf{Case (ii).}
The natural numebr $d$ is even and write $\Tt^d=\Tt^{d/2}\times\Tt^{d/2}$. Moreover,
\begin{itemize}
\item $g=(g_1,g_2)\colon\Tt^d\to\Tt^{d/2}\times\Tt^{d/2}$ is $C^1$ and
preserves $m$,
\item $\nu=\delta_0\times \chi$, where $\delta_0$ is the Dirac measure at $0\in\Tt^{d/2}$, and $\chi=h\,m_1$; here $m_1$ denotes the normalized Lebesgue measure on the second factor $\Tt^{d/2}$, $h\in L^1(m_1)$, $h\geq 0$, and
\[
\int_{\Tt^{d/2}}h(s)\,dm_1(s)=1,
\]
\item for every $y\in\Tt^d$, the map
\[
H_y\colon\Tt^{d/2}\to\Tt^{d/2},
\qquad
H_y(s)=g_1\bigl(g(y)+(0,s)\bigr),
\]
is a $C^1$ diffeomorphism.
\end{itemize}

These two cases cover the random additive perturbations of billiard maps and generalized standard maps considered later in the paper.

\begin{lemma}
\label{lem:strong-feller-ac}
Suppose that Case (i) holds. Then $P$ is strong Feller.
\end{lemma}

\begin{proof}
Since $\nu\ll m$, there exists $p\in L^1(m)$, $p\geq 0$, such that
\[
d\nu(z)=p(z)\,dm(z).
\]
For $w\in\Tt^d$, let $U_w\colon L^1(m)\to L^1(m)$ be the translation operator
$U_w\psi(z)=\psi(z-w)$. The family $\{U_w:w\in\Tt^d\}$ is strongly continuous on $L^1(m)$.

Let $\varphi\colon\Tt^d\to\Rr$ be bounded and Borel. Using the translation-invariance of $m$, we obtain
\[
\begin{aligned}
P\varphi(y)
&=
\int_{\Tt^d}\varphi(g(y)+z)p(z)\,dm(z) =
\int_{\Tt^d}\varphi(z)p(z-g(y))\,dm(z)  \\
&=
\int_{\Tt^d}\varphi(z)U_{g(y)}p(z)\,dm(z).
\end{aligned}
\]
Therefore, for $y_1,y_2\in\Tt^d$,
\[
\left|P\varphi(y_1)-P\varphi(y_2)\right|
\leq
\|\varphi\|_\infty
\left\|U_{g(y_1)}p-U_{g(y_2)}p\right\|_{L^1}.
\]
Since $g$ is continuous and $w\mapsto U_w p$ is continuous as a map from $\Tt^d$ to $L^1(m)$, the last term converges to $0$ as $y_2\to y_1$. Hence $P\varphi$ is continuous. Thus $P$ is strong Feller.
\end{proof}

\begin{lemma}
\label{lem:strong-feller-singular}
Suppose that Case (ii) holds. Then $Q:=P^2$ is strong Feller.
\end{lemma}

\begin{proof}
Let $\varphi\colon\Tt^d\to\Rr$ be bounded and Borel. Since $\nu=\delta_0\times h\,m_1$, we may write the noise variables as $(0,s)$, with $s\in\Tt^{d/2}$. For $y\in\Tt^d$, define
\[
V_y(s)=g_2\bigl(g(y)+(0,s)\bigr),
\qquad s\in\Tt^{d/2}.
\]
Then
\[
f_{(0,t)}\circ f_{(0,s)}(y)
=
\bigl(H_y(s),V_y(s)+t\bigr),
\]
and therefore
\[
Q\varphi(y)
=
\int_{\Tt^{d/2}}\int_{\Tt^{d/2}}
\varphi\bigl(H_y(s),V_y(s)+t\bigr)h(s)h(t)\,dm_1(s)\,dm_1(t).
\]

Fix $y\in\Tt^d$, and let $\tau_y=H_y^{-1}$. Define
\[
\Phi_y\colon\Tt^{d/2}\times\Tt^{d/2}\to\Tt^d, \qquad \Phi_y(s,t)=\bigl(H_y(s),V_y(s)+t\bigr).
\]
Since $H_y$ is a $C^1$ diffeomorphism, $\Phi_y$ is a $C^1$ diffeomorphism of $\Tt^d$. Its inverse is
\[
\Phi_y^{-1}(z_1,z_2)
=
\bigl(\tau_y(z_1),\,z_2-V_y(\tau_y(z_1))\bigr).
\]
Moreover,
\[
\left|\det D\Phi_y(s,t)\right|
=
\left|\det DH_y(s)\right|.
\]
Thus, by the change of variables $z=\Phi_y(s,t)$, we can write
\[
Q\varphi(y)
=
\int_{\Tt^d}\varphi(z)\alpha_y(z)\,dm(z),
\]
where, for $z=(z_1,z_2)\in\Tt^{d/2}\times\Tt^{d/2}$,
\[
\alpha_y(z_1,z_2)
=
\frac{
h(\tau_y(z_1))
h\bigl(z_2-V_y(\tau_y(z_1))\bigr)
}{
\left|\det DH_y(\tau_y(z_1))\right|
}.
\]

We first prove that $y\mapsto\alpha_y$ is continuous as a map from $\Tt^d$ to $L^1(\Tt^d)$ when $h$ is continuous. Consider
\[
G\colon\Tt^d\times\Tt^{d/2}\to\Tt^d\times\Tt^{d/2},
\qquad
G(y,s)=(y,H_y(s)).
\]
The map $G$ is continuous and bijective. Since the domain is compact and the codomain is Hausdorff, $G^{-1}$ is continuous. Hence
\[
(y,z_1)\mapsto \tau_y(z_1)
\]
is continuous. Moreover, $(y,s)\mapsto V_y(s)$ and $(y,s)\mapsto DH_y(s)$ are continuous. Since $\det DH_y(s)\neq 0$ for every $(y,s)$, compactness implies that
$(y,s)\mapsto \left|\det DH_y(s)\right|^{-1}$ is continuous and bounded. Therefore, if $h$ is continuous, the map $(y,z)\mapsto \alpha_y(z)$ is continuous on $\Tt^d\times\Tt^d$. In particular,
\[
\|\alpha_y-\alpha_{y_0}\|_{L^1}\longrightarrow 0
\qquad\text{as }y\to y_0.
\]

We now pass to the general case $h\in L^1(m_1)$. Choose $h_n\in C(\Tt^{d/2})$ such that $h_n\to h$ in $L^1(m_1)$. Let $\alpha_y^{(n)}$ be the function obtained from the previous formula by replacing $h$ with $h_n$. Equivalently, if $L_y$ denotes the Perron--Frobenius operator associated with the diffeomorphism $\Phi_y$, then
\[
\alpha_y=L_y(h\otimes h),
\qquad
\alpha_y^{(n)}=L_y(h_n\otimes h_n).
\]
Since $L_y$ is an isometry on $L^1(\Tt^d)$, we have
\[
\begin{aligned}
\|\alpha_y-\alpha_y^{(n)}\|_{L^1}
&=
\|L_y(h\otimes h-h_n\otimes h_n)\|_{L^1} \\
&=
\|h\otimes h-h_n\otimes h_n\|_{L^1} \\
&\leq
\|h-h_n\|_{L^1(m_1)}\|h\|_{L^1(m_1)}
+
\|h_n\|_{L^1(m_1)}\|h-h_n\|_{L^1(m_1)}.
\end{aligned}
\]
The right-hand side is independent of $y$ and converges to $0$ as $n\to\infty$. Since $y\mapsto\alpha_y^{(n)}$ is continuous from $\Tt^d$ to $L^1(\Tt^d)$ for each $n$, it follows that
\[
\|\alpha_y-\alpha_{y_0}\|_{L^1}\longrightarrow 0
\qquad\text{as }y\to y_0
\]
also for $h\in L^1(m_1)$.

Consequently,
\[
\left|Q\varphi(y)-Q\varphi(y_0)\right|
\leq
\|\varphi\|_\infty\|\alpha_y-\alpha_{y_0}\|_{L^1}
\longrightarrow 0
\qquad\text{as }y\to y_0.
\]
Thus $Q=P^2$ maps bounded Borel functions to continuous functions. Hence $Q$ is strong Feller.
\end{proof}

\begin{proposition}
\label{prop:unique-ergodicity-additive}
Assume that either Case (i) or Case (ii) holds. Then $m$ is the unique $P$-stationary probability measure. In particular, $m$ is ergodic for $P$.
\end{proposition}

\begin{proof}
First observe that $m$ is $P$-stationary. Indeed, for every bounded Borel function $\varphi\colon\Tt^d\to\Rr$,
\[
\int_{\Tt^d}P\varphi(y)\,dm(y)
=
\int_{\Tt^d}\int_{\Tt^d}\varphi(g(y)+x)\,d\nu(x)\,dm(y).
\]
For each fixed $x\in\Tt^d$, the map $y\mapsto g(y)+x$ preserves $m$, because both $g$ and translations preserve $m$. Hence $P^*m=m$.

Assume first that Case (i) holds. By Lemma~\ref{lem:strong-feller-ac}, $P$ is strong Feller. In addition, $\Tt^d$ is connected, $m$ is $P$-stationary and $\operatorname{supp}m=\Tt^d$. Hence, it follows from~\cite[Part (iii) of Proposition~5.18]{Benaim_2022} that $m$ is the unique $P$-stationary probability measure.

Assume now that Case (ii) holds. By Lemma~\ref{lem:strong-feller-singular}, the operator $Q=P^2$ is strong Feller. Since $m$ is $P$-stationary, it is also $Q$-stationary. Again, $\operatorname{supp}m=\Tt^d$, so~\cite[Part (iii) of Proposition~5.18]{Benaim_2022}, applied to $Q$, implies that $m$ is the unique $Q$-stationary probability measure.

Let $\mu$ be any $P$-stationary probability measure. Then $\mu$ is $Q$-stationary.  By uniqueness of the $Q$-stationary probability measure, $\mu=m$. Therefore $m$ is the unique $P$-stationary probability measure.

Finally, for both cases, uniqueness of the stationary probability measure implies that $m$ is ergodic for $P$.
\end{proof}

\begin{corollary}
\label{cor:skew-product-ergodicity}
Assume that either Case (i) or Case (ii) holds. Then the invariant probability measure $\rho_\nu\times m$ is ergodic for the skew-product $F$.
\end{corollary}

\begin{proof}
By Proposition~\ref{prop:unique-ergodicity-additive}, the measure $m$ is ergodic for the Markov operator $P$. Therefore, by~\cite[Proposition~5.13]{Viana:2014uo}, the invariant probability measure $\rho_\nu\times m$ is ergodic for $F$.
\end{proof}

\begin{remark}
\label{rem:ergodic}
Since $(\rho_\nu\times m)$ is ergodic by Corollary~\ref{cor:skew-product-ergodicity}, the Lyapunov exponents $\lambda_F^-(\omega,y)$ and $\lambda_F^+(\omega,y)$ are constant for $(\rho_\nu\times m)$-almost every $(\omega,y)\in\Omega\times\Tt^d$.
\end{remark}

The unique ergodicity of the Markov operator also gives the following equidistribution result for random orbits.

\begin{proposition}
\label{pr:equidistribution}
Assume that either Case (i) or Case (ii) holds. Then for every $y\in\Tt^d$ and for $\rho_\nu$-almost every $\omega\in\Omega$, the random orbit $\{F_\omega^n(y)\}_{n\geq 0}$ is equidistributed with respect to $m$:
\[
\frac{1}{n}\sum_{k=0}^{n-1}\delta_{F_\omega^k(y)}
\;\xrightarrow[n\to\infty]{\mathrm{w}^*}\;
m \qquad \text{for } \rho_\nu \text{-a.e.} \omega\in\Omega.
\]
In particular, $\rho_\nu$-a.e. random orbit of $y$ is dense in $\Tt^d$.
\end{proposition}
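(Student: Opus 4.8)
The plan is to prove equidistribution first for $m$-almost every starting point using the ergodicity of the skew-product, and then to upgrade this to \emph{every} starting point by exploiting the absolute continuity of $\nu$ together with the fact that the limiting empirical average is insensitive to a single time step.

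First, by Proposition~\ref{prop: M} the measure $\rho_\nu \times m$ is ergodic for $F$. Fix a countable set $\{\varphi_i\}_{i \ge 1}$ dense in $C(\Tt^d)$. Applying Birkhoff's ergodic theorem to each function $(\omega,y)\mapsto\varphi_i(y)\in L^1(\rho_\nu\times m)$ and discarding a $(\rho_\nu\times m)$-null set, I obtain a measurable set $\mathcal{E}\subset\Omega\times\Tt^d$ of full measure on which $\frac{1}{n}\sum_{k=0}^{n-1}\varphi_i(F^k_\omega(y))\to\int\varphi_i\,dm$ for all $i$; by density this is precisely the set where the empirical measures converge weak-$*$ to $m$. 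By Fubini, the set $G:=\{y\in\Tt^d:\rho_\nu(\mathcal{E}_y)=1\}$, where $\mathcal{E}_y=\{\omega:(\omega,y)\in\mathcal{E}\}$, satisfies $m(G)=1$, and hence $\operatorname{vol}(\Tt^d\setminus G)=0$ since $m$ is the normalized volume.

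The key step is to show $y_0\in G$ for \emph{every} $y_0\in\Tt^d$. Here I would use that the limiting empirical measure is unchanged by advancing the orbit one step: writing $F^k_\omega(y_0)=F^{k-1}_{\sigma\omega}(f_{\omega_0}(y_0))$ for $k\ge1$ and noting $\frac{1}{n}\varphi(y_0)\to0$, one checks that $(\omega,y_0)\in\mathcal{E}$ if and only if $(\sigma\omega,f_{\omega_0}(y_0))\in\mathcal{E}$. Since the coordinates of $\omega$ are i.i.d.\ with law $\nu$ and are independent of $\sigma\omega$, integrating over the first coordinate $\omega_0=x$ gives
\[
\rho_\nu(\mathcal{E}_{y_0})=\int_{\Tt^d}\rho_\nu\!\left(\mathcal{E}_{f_x(y_0)}\right)d\nu(x).
\]
Now $f_x(y_0)=g(y_0)+x$, so as $x$ ranges with law $\nu$ the point $f_x(y_0)$ has law $P_{y_0}$, which is the translate of $\nu$ by $g(y_0)$ and therefore absolutely continuous with respect to $\operatorname{vol}$. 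Because $\operatorname{vol}(\Tt^d\setminus G)=0$, it follows that $f_x(y_0)\in G$ for $\nu$-a.e.\ $x$, i.e.\ $\rho_\nu(\mathcal{E}_{f_x(y_0)})=1$ for $\nu$-a.e.\ $x$. The displayed identity then yields $\rho_\nu(\mathcal{E}_{y_0})=1$, so $y_0\in G$.

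The main obstacle is exactly this upgrade from almost every to every starting point, which the bare ergodicity of $\rho_\nu\times m$ cannot supply; the crucial input is the absolute continuity of $\nu$ (hence of $P_{y_0}$), guaranteeing that a single random step lands in the full-measure good set almost surely. Finally, the density statement follows at once: if a random orbit of $y$ avoided a nonempty open ball $B$, then choosing a nonnegative $\varphi\in C(\Tt^d)$ supported in $B$ with $\int\varphi\,dm>0$ (possible since $\operatorname{supp} m=\Tt^d$) would force $\frac{1}{n}\sum_{k=0}^{n-1}\varphi(F^k_\omega(y))=0$ for all $n$, contradicting convergence to $\int\varphi\,dm>0$.
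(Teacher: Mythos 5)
Your proof is correct, but it takes a genuinely different route from the paper. The paper disposes of the proposition in two lines by invoking Breiman's Law of Large Numbers for Markov chains (citing \cite{BenoistQuint}) applied to the uniquely ergodic, strong Feller kernel $\{P_y\}$ furnished by Lemma~\ref{lemma: L} and Proposition~\ref{prop: M}; equidistribution from \emph{every} starting point is then an off-the-shelf consequence of unique ergodicity. You instead give a self-contained argument: Birkhoff's theorem for the ergodic skew-product $(\,F,\rho_\nu\times m)$ yields a full-measure set $G$ of good starting points, and you upgrade from $m$-a.e.\ $y$ to every $y$ by the observation that the Ces\`aro limit is insensitive to dropping one term, combined with the identity $\rho_\nu(\mathcal{E}_{y_0})=\int\rho_\nu(\mathcal{E}_{f_x(y_0)})\,d\nu(x)$ and the fact that $P_{y_0}=(\tau_{g(y_0)})_*\nu\ll\operatorname{vol}$ charges no $\operatorname{vol}$-null set, so a single random step lands in $G$ almost surely. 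This one-step regularization is exactly the mechanism hiding inside the black box the paper cites, and your version makes transparent that the only inputs are ergodicity of $\rho_\nu\times m$ and absolute continuity of the one-step transition kernel; the paper's version is shorter and generalizes more readily to Feller chains whose transitions are not absolutely continuous. The only points worth tightening are cosmetic: define $\mathcal{E}$ as the exact (measurable) set where the empirical measures converge weak-$*$ to $m$, so that your ``if and only if'' under the shift holds pointwise rather than merely modulo a null set, and note that the measurability of $x\mapsto\rho_\nu(\mathcal{E}_{f_x(y_0)})$ needed for the displayed Fubini identity follows from the measurability of $\mathcal{E}$ and of $(x,\omega')\mapsto(\omega',g(y_0)+x)$.
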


\begin{proof}
By Proposition~\ref{prop:unique-ergodicity-additive}, $P$ admits the unique stationary probability measure $m$. Therefore, Breiman's law of large numbers for Markov chains~\cite[Corollary~2.7]{BenoistQuint}, applied to the uniquely ergodic Markov kernel $P$, gives
\[
\frac{1}{n}\sum_{k=0}^{n-1}\delta_{F_\omega^k(y)}
\;\xrightarrow[n\to\infty]{\mathrm{w}^*}\;
m
\]
for $\rho_\nu$-a.e. $\omega\in\Omega$.

Finally, since $m$ assigns positive measure to every nonempty open subset of $\Tt^d$, equidistribution implies that $\rho_\nu$-a.e. random orbit of $y$ is dense in $\Tt^d$.
\end{proof}

\section{Random billiards}
\label{sec:billiards}
\subsection{Convex billiards on constant-curvature surfaces}

In this section, we recall the fundamental definitions and properties of billiards in convex domains on surfaces of constant curvature. For a comprehensive treatment of billiards on general surfaces, we refer to~\cite{DiasCarneiro:2024}, and for the case of surfaces with constant curvature to~\cite{dosSantos:2017}.

Let $S$ denote one of the standard surfaces of constant curvature $K$: the Euclidean plane $\mathbb{E}^2$ ($K=0$), the sphere $\mathbb{S}^2$ ($K=1$) and the hyperbolic plane $\mathbb{H}^2$ ($K=-1$). Let $D \subset S$ be a domain whose boundary $\partial D$ is a $C^2$ simple closed convex curve with positive geodesic curvature (such a curve is called an \emph{oval} in~\cite{dosSantos:2017,DiasCarneiro:2024}). We parametrize $\partial D$ by arc-length $s$, normalized so that the total length of the curve is 1, i.e. $|\partial D|=1$.

A \emph{billiard} with table $D$ is the mechanical system consisting of a point particle that moves inside $D$ along geodesics of $S$ and reflects elastically upon colliding with the boundary $\partial D$, obeying the usual law of reflection: the angles of reflection equals to angle of incidence. We refer to such systems as \emph{convex billiards}.

\subsubsection{Billiard map in coordinates $(s,\theta)$}	
Each collision of the particle with $\partial D$ is described by the pair $(s,\theta)$, where $s$ denotes the arc-length parameter (mod 1) of the point of impact, and $\theta\in[0,\pi]$ is the angle between the incoming trajectory and the positively oriented tangent to $\partial D$ at $s$. 
Hence, the space of all possible collisions is the closed cylinder
\[
Q=S^1 \times [0,\pi]
\] 

The billiard map associated with the table $D$ on a surface $S$ is the map $\phi \colon Q\to Q$ that 
assigns to each collision $(s,\theta) \in Q$ the next collision 
\[
\phi(s,\theta)=(s_1(s,\theta),\theta_1(s,\theta)).
\] 

The map $\phi$ for general surfaces of constant curvature retains the same properties of the billiard map for planar convex billiards.

\begin{proposition}[\cite{DiasCarneiro:2024}] \label{prop:properties}
The map $\phi \colon Q \to Q$ satisfies the following properties:
\begin{enumerate} 
\item $\phi$ is a homeomorphism, 
\item $\operatorname{int} Q := S^1 \times (0,\pi)$ and $\partial Q:= S^1 \times \{0,\pi\}$ are $\phi$-invariant sets, 
\item $\phi|_{\operatorname{int} Q} \colon \operatorname{int} Q \to \operatorname{int} Q$ is a twist $C^1$ diffeomorphism,
\item $\phi|_{\partial Q} = \operatorname{id}_{\partial Q}$,
\item $\phi$ preserves the measure $d \bar{m} = \sin \theta \,ds \, d\theta$. 
\end{enumerate}
\end{proposition}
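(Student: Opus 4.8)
The plan is to realize $\phi$ as an exact twist map generated by the geodesic-distance function and to read off all five properties from this structure. First I would introduce the generating function $h(s,s_1)$, defined as the length of the geodesic segment of $S$ joining the boundary points $\gamma(s)$ and $\gamma(s_1)$, where $\gamma$ denotes the arc-length parametrization of $\partial D$. Strict convexity of $D$ together with the positivity of the geodesic curvature guarantees that for $s \neq s_1$ this connecting geodesic is unique and contained in $D$, so $h$ is well defined off the diagonal $\{s = s_1\}$. Since $S$ has constant curvature, the distance between two points is given explicitly by the corresponding law of cosines (Euclidean, spherical, or hyperbolic), and combining this with the $C^2$ regularity of $\gamma$ shows that $h$ is $C^2$ away from the diagonal.

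Next I would establish the variational characterization of $\phi$. By the first variation formula for arc length, the partial derivatives of $h$ are $\partial_s h = -\cos\theta$ and $\partial_{s_1} h = \cos\theta_1$, where $\theta$ and $\theta_1$ are the reflection angles at the two endpoints. These identities encode the law of reflection: the point $(s_1,\theta_1) = \phi(s,\theta)$ is precisely the one for which $h(s,s_1)+h(s_1,s_2)$ is stationary in $s_1$, so incidence and reflection angles agree at each collision. Thus $\phi$ is the twist map generated by $h$ in the classical sense, and the two generating relations determine $\phi$ implicitly.

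The twist property then reduces to showing that the mixed partial derivative $\partial_s\partial_{s_1} h$ has constant sign on $\operatorname{int} Q$. Differentiating $\partial_s h = -\cos\theta$ with respect to $s_1$ shows that this sign controls $\partial s_1/\partial\theta$, and a direct computation using the explicit constant-curvature distance formula together with the positive geodesic curvature of $\partial D$ yields $\partial_s\partial_{s_1} h \neq 0$, which is the monotone-twist condition. The implicit function theorem applied to the generating relations then shows that $\phi$ is a $C^1$ diffeomorphism of $\operatorname{int} Q$ onto itself, giving property (3) and the interior part of property (1). For the invariant measure, I would pass to the coordinate $r = \cos\theta$; the generating relations read $r = -\partial_s h$ and $r_1 = \partial_{s_1} h$, whence the identity $d h = -r\, ds + r_1\, ds_1$ gives $ds_1 \wedge dr_1 = ds \wedge dr$ upon taking exterior derivatives. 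Since $dr = -\sin\theta\, d\theta$, this is exactly the invariance of $d\bar m = \sin\theta\, ds\, d\theta$, which is property (5).

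Finally, the boundary behaviour (properties (2) and (4)) and the continuity of $\phi$ up to $\partial Q$ require analyzing the degenerate limits $\theta \to 0^+$ and $\theta \to \pi^-$, where the connecting chord shrinks to a point and $h$ becomes singular along the diagonal. As the angle tends to $0$, the length of the billiard chord tends to $0$ and the next collision $s_1$ converges to $s$ with $\theta_1 \to 0$, so $\phi$ extends continuously to the identity on $S^1 \times \{0\}$, and symmetrically on $S^1 \times \{\pi\}$; this yields $\phi|_{\partial Q} = \operatorname{id}_{\partial Q}$, the invariance of both $\operatorname{int} Q$ and $\partial Q$, and completes the verification that $\phi$ is a homeomorphism of $Q$. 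I expect this last step, controlling the degeneration of the generating function at the diagonal and establishing the $C^0$ extension to the boundary, to be the main technical obstacle, since it is precisely where the twist structure breaks down; the explicit constant-curvature distance formula is what makes the limiting behaviour tractable.
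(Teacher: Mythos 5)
The paper does not prove this proposition at all: it is imported verbatim from \cite{DiasCarneiro:2024} as a known property of convex billiards on constant-curvature surfaces, so there is no in-paper argument to compare against. Your generating-function outline is the standard route to these facts and is essentially sound: the relations $\partial_s h=-\cos\theta$, $\partial_{s_1}h=\cos\theta_1$ do characterize $\phi$ variationally; the exactness identity $dh=-r\,ds+r_1\,ds_1$ does yield invariance of $ds\wedge dr=\sin\theta\,ds\wedge d\theta$; and the implicit function theorem applied to the $C^2$ distance function gives the $C^1$ twist diffeomorphism on $\operatorname{int}Q$. Two points deserve more care than you give them. First, on $\mathbb{S}^2$ the uniqueness of the connecting geodesic and the nonvanishing of $\partial_s\partial_{s_1}h$ (which equals $\sin\theta\sin\theta_1/\ell(t)$ with $\ell(t)=t,\sin t,\sinh t$ in the three geometries) require the chord length to stay strictly below $\pi$; this follows because an oval with positive geodesic curvature on the sphere bounds a region contained in an open hemisphere, but it should be said explicitly — note also that the cross-derivative itself does not involve the curvature of $\partial D$, contrary to what your twist paragraph suggests. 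Second, the continuous extension to $\partial Q$ as the identity, which you correctly flag as the main technical obstacle, is the one step you leave entirely undone; it needs the quantitative estimate $t(s,\theta)=O(\theta)$ coming from the uniform positive lower bound on the geodesic curvature (the same estimate underlying the derivative limit \eqref{eq:zeroangle} used later in the paper). As an outline your proposal matches what one expects the cited reference to do, but it is a proof sketch with that last step deferred rather than a complete argument.
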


Let $\kappa(s)$ denote the geodesic curvature of $\partial D$ at the point with arc-length parameter $s$. Let $t(s,\theta)$ be the geodesic distance on $S$ between the points of $\partial D$ corresponding to $s$ and $s_1(s,\theta)$. Write 
\[
t = t(s,\theta), \quad \theta_1 = \theta_1(s,\theta), \quad \kappa = \kappa(s), \quad \kappa_1 = \kappa(s_1(s,\theta)).
\] 
Then for every $(s,\theta) \in \operatorname{int} Q$, the derivative $D \phi(s,\theta)$ is given by following expressions, corresponding respectively to the cases $S=\mathbb{E}^2$, $S=\mathbb{S}^2$ and $S=\mathbb{H}^2$~\cite{Chernov:2006we,Katok:1986tg,dosSantos:2017}:
\begin{equation} \label{eq:derivative}
\begin{gathered}
D \phi(s,\theta) = 
\begin{pmatrix}
	\frac{\kappa t -\sin \theta}{\sin \theta_1} & \frac{t}{\sin \theta_1} \\
	\frac{\kappa_1 \kappa t -\kappa_1 \sin \theta -\kappa \sin \theta_1}{\sin \theta_1} & \frac{\kappa_1 t -\sin \theta_1}{\sin \theta_1} 
\end{pmatrix}, \\
D\phi(s,\theta) = 
\begin{pmatrix}
\frac{\kappa \sin t - \cos t \sin\theta}{\sin\theta_1} & \frac{\sin t}{\sin\theta_1}\\
\frac{\sin t (\kappa \kappa_1 - \sin\theta \sin\theta_1) - \cos t (\kappa_1 \sin\theta + \kappa \sin\theta_1)}{\sin\theta_1} & \frac{\kappa_1 \sin t - \cos t \sin\theta_1}{\sin\theta_1}
\end{pmatrix}, \\
D\phi(s,\theta) = 
\begin{pmatrix}
\frac{\kappa \sinh t - \cosh t \sin\theta_1}{\sin\theta_1} & \frac{\sinh t}{\sin\theta_1} \\
\frac{\sinh t (\kappa \kappa_1 + \sin\theta \sin\theta_1) - \cosh t (\kappa_1 \sin\theta + \kappa \sin\theta_1)}{\sin\theta_1} & \frac{\kappa_1 \sinh t - \cosh t \sin\theta_1}{\sin\theta_1}
\end{pmatrix}.
\end{gathered}
\end{equation}

Moreover, in all cases, the derivative $D\phi$ admits the following limits (see~\cite{Katok:1986tg} and \cite[Proposition~3.4]{DiasCarneiro:2024}): for every $s \in S^1$, 
\begin{equation} \label{eq:zeroangle}
\lim_{\theta \to 0^+}  D \phi(s,\theta) = \lim_{\theta \to \pi^-}  D \phi(s,\theta) =
\begin{pmatrix}
	1 & \frac{2}{\kappa(s)} \\
	0 & 1
\end{pmatrix}.
\end{equation}

\subsubsection{Billiard map in coordinates $(s,r)$}
For the purposes of this work, it is convenient to replace the coordinate $\theta$ by $r=-\cos \theta \in [-1,1]$. In coordinates $(s,r)$, the set of all collisions is given by the closed cylinder $V = S^1 \times [-1,1]$, and the billiard map is denoted by $\Phi \colon V \to V$.

The map $\Phi$ inherits the properties of $\phi$ described in Proposition~\ref{prop:properties}. Namely: 1) $\Phi$ is a homeomorphism, 2) the sets $\operatorname{int} V := S^1 \times (-1, 1)$ and $\partial V := S^1 \times \{-1, 1\}$ are invariant under $\Phi$, 3) $\Phi|_{\operatorname{int} V}$ is a twist $C^1$ diffeomorphism, 4) $\Phi_{\partial V} = \operatorname{id}_{\partial V}$, 5) $\Phi$ preserves the measure $ds dr$. 

However, a key difference between the two maps is that $\det D\Phi \equiv 1$ on $\operatorname{int} V$, whereas for $\Phi$, one has  $\det \phi = \sin \theta / \sin \theta_1 \not\equiv 1$ on $\operatorname{int} Q$. This fact is the reason for working with the map $\Phi$ rather than with $\phi$.


\subsubsection{Billiard map on the $2$-torus} 
\label{su:torus}
We now introduce a map $T$ on the 2-torus, induced by the billiard map $\Phi$. Random perturbations of $T$ will be studied in Section~\ref{ss:random_billiards}.

Since the restriction of $\Phi$ to $\partial V$ is the identity, we can define an automorphism $T$ of the torus $\Tt^2 = \mathbb{R}/\mathbb{Z} \times \mathbb{R}/2 \mathbb{Z}$ endowed with the flat metric $ds^2+dr^2$ by choosing $R:=[0,1) \times [-1,1)$ as a fundamental domain of $\mathbb{R}/\mathbb{Z} \times \mathbb{R}/2 \mathbb{Z}$ and setting 
\[
T(s,r) = \Phi(s,r) \quad \text{for all } (s,r) \in R.
\] 

The properties of $\Phi$ imply the following properties of $T$: 1) $T\colon\mathbb T^2\to\mathbb T^2$ is a homeomorphism, 2) the sets $\operatorname{int}R=[0,1)\times(-1,1)$ and $\partial R=[0,1)\times\{-1\}$ are $T$-invariant, 3) the restriction $T|_{\operatorname{int}R}\colon\operatorname{int}R\to\operatorname{int}R$ is a $C^1$ diffeomorphism and $\det DT(s,r)=1$ for every $(s,r)\in\operatorname{int}R$, 4) every point of $\partial R$ is fixed by $T$, i.e. 
$T(s,-1)=(s,-1)$ for every $s\in[0,1)$, 5) $T$ preserves the normalized Haar measure $m$ on $\mathbb T^2$, given in the coordinates $(s,r)$ by $dm=c\,ds\,dr$,
where $c>0$ is chosen so that $m(\mathbb T^2)=1$.

\subsection{Circular billiards}
\label{su:circular}

Let $D\subset S$ be a geodesic disk. Since $S$ has constant curvature, the boundary $\partial D$ is necessarily a geodesic circle. In this paper, we restrict our attention to geodesic disks whose boundaries $\partial D$ have positive geodesic curvature.

Under these assumptions, the billiard map $\phi\colon Q\to Q$ in coordinates $(s,\theta)$ associated with the geodesic disk takes the simple form
\[
\phi(s,\theta) = (s+\alpha(\theta),\theta) \quad \text{for all } (s,\theta)\in Q,
\]
for a suitable smooth function $\alpha$ on $\operatorname{int} Q$; see~\cite{Bolotin1992IntegrableBilliards,santos2022break}. Similarly, the corresponding map $T\colon\Tt^2\to\Tt^2$ has the form
\[
T(s,\theta) = (s+\beta(r),r) \quad \text{for all } (s,\theta)\in\Tt^2,
\]
where $\beta(r)=\alpha(-\arccos r)$. Both maps $\phi$ and $T$ are integrable.

\subsection{Random additive perturbations of billiards}
\label{ss:random_billiards}

Let $T\colon\Tt^2\to\Tt^2$ be the billiard map associated with a convex domain $D$ on a surface of constant curvature. We consider random additive perturbations of $T$ in the framework of Section~\ref{sec:random maps}.

We equip $\Tt^2$ with its standard flat metric. Let $\nu$ be a probability measure on $\Tt^2$, and set $\Omega=(\Tt^2)^{\Nn}$ and $\rho_\nu=\nu^{\Nn}$. For each $x \in X$, define
\[
f\colon \Tt^2\to\Tt^2,
\qquad
f_x(y)=T(y)+x.
\]
The corresponding random dynamical system is given by the skew product
\[
F(\omega,y)=\bigl(\sigma(\omega),f_{\omega_0}(y)\bigr),
\qquad
(\omega,y)\in\Omega\times\Tt^2.
\]

\begin{lemma}
\label{lemma:0}
The billiard map $T\colon\Tt^2\to\Tt^2$ is a toral map in the sense of Definition~\ref{def:toral_map} with $d=2$ and $N:=\operatorname{int}R$. Consequently, for every probability measure $\nu$ on $\Tt^2$, the random dynamical system $F$ is a $\nu$-random additive perturbation of $T$.
\end{lemma}

\begin{proof}
Conditions~(1) and~(2) of Definition~\ref{def:toral_map} follow from the  properties of the map $T$. It remains to verify Condition~(3). Since $DT=D\Phi$ on $N=\operatorname{int}R$ and $m(\partial R)=0$, it suffices to prove that $\log^+\|D\Phi\|$ is integrable with respect to the measure $dsdr$ on $N$. Indeed, since $\Phi$ is a 2-dimensional map, $\det D\Phi \equiv 1$ implies that $\|(D\Phi)^{-1}\|=\|D\Phi\|$, and hence the integrability of $\log^+ \|D\Phi\|$ on $N$ gives the integrability of $\log^+ \|(D\Phi)^{-1}\|$ on $N$. 

Let  $h \colon [0,1) \times (0,\pi) \to N$ be the change of coordinates given by $h(s,\theta)= (s,-\cos \theta)$. Since $\Phi = h \circ \phi \circ h^{-1}$, the chain rule together with
\[
\|Dh(s,\theta)\|\leq 1, \qquad (s,\theta) \in [0,1) \times (0,\pi),
\]
and
\[
\|Dh^{-1}(s,r)\|=\frac{1}{\sqrt{1-r^2}}, \qquad (s,r) \in N,
\]
gives for every $(s,r) \in N$,
\[
\log^+\|D\Phi(s,r)\|
\leq
\log^+\|D\phi(h^{-1}(s,r))\|
-\frac12\log(1-r^2).
\]
The explicit formula for $D\phi$ and the boundedness of the free-flight time function $(s,\theta) \mapsto t(s,\theta)$ yield a constant $C\geq 1$ such that
\[
\|D\phi(h^{-1}(s,r))\|
\leq
\frac{C}{\sqrt{1-r_1(s,r)^2}},
\]
where $\Phi(s,r)=(s_1(s,r),r_1(s,r))$. Therefore for every $(s,r) \in N$,
\[
\log^+\|D\Phi(s,r)\|
\leq
\log C-\frac12\log(1-r^2)
-\frac12\log\bigl(1-r_1(s,r)^2\bigr).
\]
The function $r\mapsto-\log(1-r^2)$ is integrable on $(-1,1)$. Moreover, since $\Phi$ preserves $dsdr$,
\[
\int_N
-\log\bigl(1-r_1(s,r)^2\bigr)\,ds\,dr < \infty, \qquad 
\int_N
-\log(1-r^2)\,ds\,dr<\infty.
\]
Hence $\log^+\|D\Phi\|$ is integrable on $N$. Thus $T$ satisfies Condition~(3). The second assertion of the lemma follows from Lemma~\ref{le:add-pert}.
\end{proof}

\subsection{Vanishing exponents and circular billiards}
\label{sss:abscont}

For notational convenience, we denote the extremal Lyapunov exponents of $F$ by $\lambda^-$ and $\lambda^+$, omitting the subscript $F$. 


We now show that when the billiard table $D$ is a geodesic disk, the Lyapunov exponents $\lambda^-(\omega,y)$ and $\lambda^+(\omega,y)$ vanish for any choice of the probability measure $\nu$.

\begin{lemma}
\label{lemma: 8}
Suppose that $D$ is a geodesic disk. Then for every probability measure $\nu$ on $\Tt^2$, we have 
\[
\lambda^-(\omega,y) = \lambda^+(\omega, y) = 0 \quad \text{for }\rho_\nu \times m \text{-a.e. }(\omega, y) \in \Omega \times \Tt^2.
\]
\end{lemma}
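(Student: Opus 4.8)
The plan is to exploit the fact that for a geodesic disk the map $T$ is an integrable twist of the form $T(s,r)=(s+\beta(r),r)$ (Section~\ref{su:circular}), so that on $\operatorname{int}R$ its derivative is the \emph{unipotent} matrix
\[
DT(s,r)=\begin{pmatrix} 1 & \beta'(r)\\ 0 & 1\end{pmatrix}.
\]
Since $Df_x(y)=DT(y)$ for every $x$, and upper-triangular unipotent matrices commute with their off-diagonal entries simply adding, the cocycle collapses completely: writing $Y_k=F^k_\omega(y)=(s_k,r_k)$, the chain rule gives
\[
DF^n_\omega(y)=\prod_{k=0}^{n-1}DT(Y_k)=\begin{pmatrix} 1 & S_n\\ 0 & 1\end{pmatrix},\qquad S_n=\sum_{k=0}^{n-1}\beta'(r_k).
\]
This reduces the problem to controlling the scalar sum $S_n$. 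Since $\max(1,|S_n|)\le\|DF^n_\omega(y)\|\le 1+|S_n|$, and the same bounds hold for the inverse (whose off-diagonal entry is merely $-S_n$, so $\|(DF^n_\omega(y))^{-1}\|=\|DF^n_\omega(y)\|$), we have
\[
0\le\frac1n\log\|DF^n_\omega(y)\|=\frac1n\log\|(DF^n_\omega(y))^{-1}\|\le\frac1n\log\bigl(1+|S_n|\bigr),
\]
so it suffices to prove that $|S_n|$ grows subexponentially for $\rho_\nu\times m$-a.e.\ $(\omega,y)$.

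Next I would bound $|S_n|\le\Sigma_n:=\sum_{k=0}^{n-1}G(Y_k)$ with $G(s,r):=|\beta'(r)|$, and show $\Sigma_n$ grows at most linearly. The delicate point is that $\beta'$ is \emph{unbounded} as $r\to\pm1$: by~\eqref{eq:zeroangle} the twist is nonzero at grazing collisions, while the change of coordinates $r=-\cos\theta$ carries the Jacobian factor $1/\sin\theta=1/\sqrt{1-r^2}$, which is singular at $r=\pm1$, so that $|\beta'(r)|=|\alpha'(\theta)|/\sin\theta\to\infty$. Nevertheless $G$ is \emph{integrable}, since that same change of variables gives
\[
\int_{\Tt^2}G\,dm=\tfrac12\int_{-1}^{1}|\beta'(r)|\,dr=\tfrac12\int_0^\pi|\alpha'(\theta)|\,d\theta<\infty,
\]
because $\alpha'$ is bounded on $[0,\pi]$. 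Hence $\tilde G(\omega,y):=G(y)$ lies in $L^1(\rho_\nu\times m)$.

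Finally I would invoke the ergodic theorem. For \emph{every} $\nu$ the measure $m$ is stationary for the chain $\{Y_n\}$ (adding an independent $\nu$-distributed increment preserves the Haar measure $m$ on $\Tt^2$, and $T$ preserves $m$), so $\rho_\nu\times m$ is $F$-invariant, as recorded after Assumption~\ref{as: three}. Birkhoff's theorem applied to $\tilde G\in L^1(\rho_\nu\times m)$ then yields $\frac1n\Sigma_n=\frac1n\sum_{k=0}^{n-1}\tilde G\circ F^k\to\tilde G^*$ a.e., with $\tilde G^*$ finite a.e.; crucially, \emph{no ergodicity is required}, which is what lets the argument run for arbitrary $\nu$. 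Consequently $\Sigma_n\le C(\omega,y)\,n$ eventually, so $\frac1n\log(1+|S_n|)\le\frac1n\log(1+\Sigma_n)\to0$, and the squeeze above forces $\lambda^+(\omega,y)=\lambda^-(\omega,y)=0$ for $\rho_\nu\times m$-a.e.\ $(\omega,y)$. One technical point to verify along the way is that $\rho_\nu\times m$-a.e.\ orbit stays in the differentiability domain $N=\operatorname{int}R$ for every $k$, so that the derivative formula applies; this holds because $m(\Tt^2\setminus N)=0$ while each $Y_k$ has law $m$, making the exceptional set a countable union of null sets. The main obstacle is exactly reconciling the unboundedness of $\beta'$ at the boundary $r=\pm1$ with the need for only linear growth of $\Sigma_n$, and this is resolved by the observation that $\beta'\in L^1$ despite being unbounded, so that Birkhoff controls the Birkhoff sums in spite of the singularity of the twist at grazing collisions.
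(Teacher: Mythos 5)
Your proof is correct, and its second half is genuinely different from the paper's. Both arguments start from the same observation that $DT$ is the unipotent upper-triangular matrix $\begin{pmatrix}1 & \beta'(r)\\ 0 & 1\end{pmatrix}$ on $\operatorname{int}R$, but they diverge from there. The paper simply notes that $\partial/\partial s$ is a fixed vector of the cocycle, so by Oseledets one of the two exponents vanishes, and then uses $\det DT\equiv 1$ (Remark~\ref{remark: one}) to force the other to vanish as well --- two lines, no integrability check, no explicit product formula. You instead compute the cocycle in closed form as a single unipotent matrix whose off-diagonal entry is the Birkhoff sum $S_n=\sum_{k<n}\beta'(r_k)$, observe that $\|DF^n_\omega(y)\|=\|(DF^n_\omega(y))^{-1}\|\le 1+|S_n|$, and control $|S_n|$ by Birkhoff's theorem applied to $|\beta'|$, which you correctly verify lies in $L^1(m)$ despite the $1/\sqrt{1-r^2}$ singularity at grazing collisions (using \eqref{eq:zeroangle} to bound $\alpha'$ on $[0,\pi]$). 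What your route buys is a direct, quantitative proof that $\tfrac1n\log\|DF^n_\omega(y)\|\to 0$, bypassing both Oseledets and the zero-sum-of-exponents argument (you only need the pointwise ergodic theorem, and correctly note that ergodicity is not required, so arbitrary $\nu$ is covered --- the same is true of the paper's proof). What it costs is the extra work of checking $\beta'\in L^1$, which the paper's softer argument never needs; your handling of that point, and of the measure-zero set where the orbit leaves $\operatorname{int}R$, is sound.
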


\begin{proof} 
Let $\nu$ be a probability measure on $\Tt^2$. The expression of the billiard map $T$ associated with a geodesic disk $D$ is given in Section~\ref{su:circular}. For every $x\in \Tt^2$, we have
\[
Df_x(s,r)=DT(s,r)
=
\begin{pmatrix}
1 & \beta'(r)\\
0 & 1
\end{pmatrix},
\qquad (s,r)\in \operatorname{int} R.
\]
Hence
\[
Df_x(y)\frac{\partial}{\partial s}
=
\frac{\partial}{\partial s},
\qquad
(x,y)\in \Tt^2\times \operatorname{int} R.
\]
It follows that for $\rho_\nu\times m$-a.e. $(\omega,y)$, 
\[
\lim_{n\to+\infty}
\frac1n
\log\left\|
D F_\omega^n(y)\frac{\partial}{\partial s}
\right\|
=0,
\]
and hence one Lyapunov exponent vanishes $\rho_\nu\times m$-a.e. Since their sum is zero $\rho_\nu\times m$-a.e., both exponents vanish $\rho_\nu\times m$-a.e., as claimed.
\end{proof}

%




We now assume that $\nu \ll \operatorname{vol}$, and proceed to prove Theorem~\ref{thm:main}.

For such a $\nu$, all the conclusions of Section~\ref{Consequences} apply to the skew-product $F$. In particular, the probability measure $\rho_\nu \times m$ is ergodic, and the Lyapunov exponents $\lambda^-$ and $\lambda^+$ are constant for $\rho_\nu \times m$-almost every $(\omega, y) \in \Omega \times \Tt^2$. 

Recall that $\phi\colon Q\to Q$ is the billiard map in coordinates $(s,\theta) \in Q$ with $Q = S^1 \times [0,\pi]$.

\begin{proposition}
\label{prop:6}
Suppose $\lambda^+(\omega,y)=0$ for $\rho_\nu \times m$-a.e. $(\omega,y) \in \Omega \times \Tt^2$. Then there exists a strictly positive continuous function $\gamma \colon \operatorname{int} Q \to \mathbb{R}$ such that
\[
D \phi(s,\theta) \frac{\partial}{\partial s} = \gamma(s,\theta) \frac{\partial}{\partial s} \quad \text{for all } (s,\theta) \in \operatorname{int} Q.
\]
\end{proposition}

\begin{proof}
By Lemma~\ref{lemma:0}, Theorem~\ref{thm:invariant} applies to $F$. Moreover, the expression of $D\phi$ in~\eqref{eq:derivative} and the property~\eqref{eq:zeroangle}, together with the definitions of $\Phi$ and $T$, show that $DT$ admits a factorization required by Corollary~\ref{cor:K}, which therefore applies to $F$. Since $DT$ is continuous on $[0,1) \times (-1,1)$, it follows that $\{DT(s,r) \colon (s,r) \in [0,1) \times (-1,1)\}$ is contained in the support of $DT_* m$. Hence, by Theorem~\ref{thm:invariant} and Corollary~\ref{cor:K}, we obtain
\[
DT(s,r)_* \delta_{\widehat{e}} = \delta_{\widehat{e}} \quad \text{for all } (s,r) \in [0,1) \times (-1,1).
\]
 where $\widehat{e} \,$ is the element of $\mathbb{P}^1$ with homogeneous coordinates $[1:0]$. 
 
This implies that $\widehat{e} \,$ is a fixed point of the projective action of $DT(s,r)$ for all $(s,r) \in [0,1) \times (-1,1)$. Equivalently, the subspace $L_s$ spanned by the vector $\partial/\partial s$ is invariant with respect to $DT(s,r)$ for every $(s,r) \in [0,1) \times (-1,1)$.

In view of the definition of $T$, we see that $L_s$ is also invariant under $D\phi(s,\theta)$ for every $(s,\theta) \in \operatorname{int} Q$. Let $\langle \cdot,\cdot \rangle$ denote the Euclidean inner product on $\Rr^2$. Thus, we have
\begin{equation*} 
D\phi(s,\theta) \frac{\partial}{\partial s} = \gamma(s,\theta) \frac{\partial}{\partial s} \quad  \text{for every } (s,\theta) \in \operatorname{int} Q,
\end{equation*}
where $\gamma(s,\theta) := \langle \partial /\partial s,D\phi(s,\theta) \partial /\partial s \rangle$. 
Since $\phi$ is a $C^1$ diffeomorphism on $\operatorname{int} Q$, the function $\gamma$ is well-defined and continuous on $\operatorname{int} Q$. Moreover, $\gamma$ must be either strictly positive or strictly negative, because $\gamma(s,\theta)=0$ for some $(s,\theta) \in \operatorname{int} Q$ would imply $D\phi(s,\theta) \partial/\partial s = 0$, contradicting the invertibility of $D\phi(s,\theta)$. To establsh that $\gamma$ is strictly positive, it suffices to show that for fixed $s$, we have $\lim_{\theta \to 0^+} \gamma(s,\theta)$ exists and is positive. Indeed, by~\eqref{eq:zeroangle}, we have $\lim_{\theta \to 0^+} \gamma(s,\theta) = \lim_{\theta \to 0^+} \langle D\phi(s,\theta) \partial /\partial s,\partial / \partial s \rangle=1$.
\end{proof}

The following is Theorem~\ref{thm:main} written in a form slightly different.

\begin{theorem}
\label{thm:7}
If $D$ is not a geodesic disk, then $\lambda^+(\omega,y)>0$ for $(\rho_\nu\times m)$-a.e. $(\omega,y)\in\Omega\times\Tt^2$. Otherwise, $\lambda^+(\omega,y)=0$ for $(\rho_\nu\times m)$-a.e. $(\omega,y)\in\Omega\times\Tt^2$.
\end{theorem}

\begin{proof}
If $D$ is a geodesic disk, then $\lambda^+ = 0$ for $\rho_\nu \times m$-a.e. $(\omega,y) \in \Omega \times \Tt^2$ by Lemma~\ref{lemma: 8}.

Conversely, assume that $\lambda^+ = 0$ for $\rho_\nu \times m$-a.e. $(\omega,y) \in \Omega \times \Tt^2$. In the remainder of the proof, we first work with the billiard map $\Phi$ in coordinates $(s,r)$, and later return to the map $\phi$. 

Let $E$ be the sub-bundle of $TV$ defined by 
\[
E(s,r) = \operatorname{span} \left(\frac{\partial}{\partial s}\right) \quad \text{for every } (s,r) \in V.
\]
By Proposition~\ref{prop:6}, we have 
\[
D \Phi(s,r) E(s,r) = E(\Phi(s,r)) \quad \text{for all } (s,r) \in \operatorname{int} V. 
\]
Note that the integral curves of the sub-bundle $E$ are the circles $\Gamma_r$, corresponding to the level sets of the function $p(s,r):=r$. Since $E$ is $D \Phi$-invariant, the foliation $\{\Gamma_r\}$ is $\Phi$-invariant.

This implies that $\Phi$ must be a skew-product of the form $\Phi(s,r) = (b(s,r),a(r))$ for some maps $a \colon [-1,1] \to [-1,1]$ and $b \colon V \to S^1$ such that $a$ is a homeomorphism fixing the points $r=-1$ and $r=1$ and a $C^1$ diffeomorphism on $(-1,1)$, whereas $b$ is continuous and $C^1$ on the interior of $V$. Since $\Phi$ preserves $ds dr$ and $p \circ \Phi = a \circ p$, the measure $dr = p_* ds dr$ must be $a$-invariant. This together with the fact that $r=-1$ and $r=1$ are fixed points of $a$ implies that $a$ is the identity. Thus $\Phi(s,r)=(b(s,r),r)$. It follows that each circle $\Gamma_r$ is $\Phi$-invariant.  
	
The same is true for the map $\phi$: it is a skew product of the form $\phi(s,\theta)=(\bar{b}(s,\theta),\theta)$ for some $C^1$ function $\bar{b}$. Note also that in this case $\det D \phi=\sin \theta/\sin \theta_1 = 1$. As a consequence, the entries on the main diagonal of the matrix of $D \phi$ must be equal to 1. Comparing with the expression of $D\phi$ in~\eqref{eq:derivative}, we 
conclude that the curvature $k$ of $\partial D$ is constant. Hence, $D$ is a geodesic disk.

We have proved that, if $\lambda^+=0$ for $(\rho_\nu\times m)$-a.e. $(\omega,y)$, then $D$ is a geodesic disk. Hence, if $D$ is not a geodesic disk, then $\lambda^+>0$ on a set of positive $(\rho_\nu\times m)$-measure. Since $\rho_\nu\times m$ is ergodic for the skew-product $F$ by Corollary~\ref{cor:skew-product-ergodicity}, it follows that $\lambda^+>0$ for $(\rho_\nu\times m)$-a.e. $(\omega,y)$. This completes the proof.
\end{proof}

\begin{remark}
The second part of the proof of Theorem~\ref{thm:7} can alternatively be derived from a result of Bialy~\cite{Bialy:1993ty,Bialy:2013}, which characterizes circular billiard tables among all convex tables on surfaces of constant curvature. Specifically, Bialy proved that circular billiards are the only ones for which every trajectory has no conjugate points. Moreover, he showed that the absence of conjugate points is equivalent to the existence of a measurable monotone invariant sub-bundle. By Proposition~\ref{prop:6}, the sub-bundle $E$ introduced in the proof of Theorem~\ref{thm:7} has this property. Thus, Bialy's result applies, and we may conclude that $D$ is a geodesic disk. However, our approach is more direct, since it relies on the specific form of the invariant sub-bundle $E$ derived in Proposition~\ref{prop:6}. For an alternative proof of Bialy's theorem in the planar case, see also~\cite{Wojtkowski:1994tj}.
\end{remark}

\begin{remark}
We discuss here a possible extension of Theorem~\ref{thm:7} to magnetic billiards. In these systems, a charged point particle moves under the influence of a constant magnetic field. Unlike standard billiards, where the particle travels along geodesics between elastic reflections at the boundary, the motion in magnetic billiards is governed by the Lorentz force, so that trajectories are curves of constant geodesic curvature determined by the field strength;  for instance, see~\cite{Berglund1996MagneticBilliards,Gutkin2001}. Upon collision with the boundary, the particle undergoes specular reflection.

For convex planar tables, Berglund and Kunz~\cite{Berglund1996MagneticBilliards} showed that the magnetic billiard map shares many qualitative features of the corresponding non-magnetic billiard map, with one notable exception: among the two boundary components of the phase space, only the circle $S^{1}\times\{-1\}$ is invariant under the magnetic billiard map, while the other component is not invariant. Nevertheless, the presence of this invariant boundary component suggests that an analog of Theorem~\ref{thm:main} may hold for magnetic billiards with convex planar tables.
\end{remark}



\section{Random generalized standard maps}
\label{sec:random_standard}

This section deals with random additive perturbations of generalized standard maps. Theorem~\ref{thm:standard} follows from the two results proved below, Proposition~\ref{prop:standard-absolute}, which treats absolutely continuous noise, and Proposition~\ref{prop:standard-singular}, which treats the case of a vertical line noise. 

Write $y=(y_1,y_2) \in \Tt^2$. Recall that a generalized \emph{standard map} is a map $g_V:\Tt^2\to\Tt^2$ defined by
\[
g_V(y_1,y_2)
=
\left(y_1+y_2+V(y_1),\, y_2+V(y_1)\right) \bmod 1,
\]
where $V:\Tt^1\to\Rr$ is a $C^1$ function. Such maps are diffeomorphisms that preserve the normalized Haar measure $m$ on $\Tt^2$. Indeed, 
\[
Dg_V(y_1,y_2)
=
\begin{pmatrix}
1+V'(y_1) & 1\\
V'(y_1) & 1
\end{pmatrix},
\]
and hence $\det Dg_V(y_1,y_2)=1$. If $V\equiv c$ is constant, then $g_V$ is a toral affine skew-product map:
\[
g_V(y_1,y_2)
=
(y_1+y_2+c,\, y_2+c) \bmod 1.
\]

Since $g_V$ is a diffeomorphism and preserves $m$, it is a toral map in the sense of Definition~\ref{def:toral_map} with $d=2$ and $N:=\Tt^2$.

In what follows, we use the notation of Subsection~\ref{random additive} with $d=2$. Thus $F \colon \Omega \times M \to \Omega \times M$ denotes the $\nu$-random additive perturbation of $g_V$, and $\lambda^{+}(\omega,y)=\lambda^{+}_F(\omega,y)$ denotes its maximal Lyapunov exponent. Unless otherwise specified, almost everywhere always means with respect to $\rho_\nu \times m$. 

\begin{proposition}
\label{prop:standard-absolute}
Suppose that $\nu \ll m$. If $V$ is constant, then $\lambda^+(\omega,y) = 0$ for $(\rho_\nu \times m)$-a.e. $(\omega,y)$. Otherwise, $\lambda^+(\omega,y) > 0$ for $(\rho_\nu \times m)$-a.e. $(\omega,y)$.
\end{proposition}

\begin{proof}
By Proposition~\ref{prop:unique-ergodicity-additive}, the measure $\rho_\nu \times m$ is ergodic. Therefore there exists a constant $b \ge 0$ such that $\lambda^+ = b$ a.e.

Assume first that $V$ is not constant. Then $V'$ takes both positive and negative values. Consequently, $\operatorname{tr}Dg_V=2+V'$ is larger than $2$ at some points, and belongs to $(0,2)$ at others. Corollary~\ref{cor:simple} implies that $b>0$. 

If $V$ is constant, then
\[
Dg_V \equiv
\begin{pmatrix}
1 & 1 \\
0 & 1
\end{pmatrix}.
\]
It follows that the derivative cocycle has polynomial growth, and therefore $b=0$. This proves the proposition.
\end{proof}


\begin{proposition}
\label{prop:standard-singular}
Suppose that $\nu$ is a vertical line measure. If $V$ is constant, then $\lambda^+(\omega,y) = 0$ for $(\rho_\nu \times m)$-a.e. $(\omega,y)$. Otherwise, $\lambda^+(\omega,y) > 0$ for $(\rho_\nu \times m)$-a.e. $(\omega,y)$.
\end{proposition}

\begin{proof}
Recall that $F \colon \Omega\times\Tt^2\to\Omega\times\Tt^2$ is the $\nu$-random additive perturbation of $g_V$. We reduce the proof to the absolutely continuous case by grouping the noise variables $\omega_n$ in pairs.

Recall that $\tau_x$ denotes the translation on $\Tt^2$ by $x \in \Tt^2$, and that $f_x = \tau_x \circ g_V$. A direct computation gives
\[
g_V \circ \tau_{(0,s)} = \tau_{(s,s)} \circ g_V, \quad s \in \Tt^1.
\]
As a consequence, for all $s,t \in \Tt^1$, we have
\begin{equation} \label{eq:two-steps}
f_{(0,t)} \circ f_{(0,s)} = \tau_{(0,t)} \circ g_V \circ \tau_{(0,s)} \circ g_V  = \tau_{(s,s+t)} \circ g^2_V.
\end{equation}

Write $x=(x_1,x_2)$ and $y=(y_1,y_2)$. Define the map $\phi \colon \Tt^2\times\Tt^2\to\Tt^2$ by
\[
\phi(x,y)=(x_1+x_2+y_1,x_2+y_2), \quad x,y \in \Tt^2.
\]
Since $\supp \nu \subset {0}\times \Tt^1$, if $x=(0,s)$ and $y=(0,t)$, then $\phi(x,y)=(s,s+t)$. Therefore, for $(\nu \times \nu)$-a.e. $(\omega_0,\omega_1) \in \Tt^2 \times \Tt^2$, we have
\[
f_{\omega_1} \circ f_{\omega_0} = \tau_{\phi(\omega_0,\omega_1)} \circ g^2_V.
\]

Let $\nu'=\phi_*(\nu\times\nu)$. It is straightforward to check that since $\nu$ is a vertical line measure,  $d\nu' = h(s)h(t-s)dsdt$ for $(s,t) \in \Tt^1 \times \Tt^1$. In particular, $\nu' \ll m$. 

The same observation shows that $\phi$ induces an isomorphism mod $0$ between the probability spaces $(\Tt^2\times\Tt^2,\BB\times\BB,\nu\times\nu)$ and $(\Tt^2,\BB,\nu')$, where $\BB$ is the Borel $\sigma$-algebra of $\Tt^2$.

Define a map $\Phi \colon \Omega \to \Omega$ by
\[
(\Phi(\omega))n =\phi(\omega_{2n},\omega_{2n+1}),
\quad
\omega \in \Omega,\ n \in \Nn.
\]
Let $\rho_{\nu'}=(\nu')^{\Nn}$. By the previous observations on $\phi$, it follows that $\Phi$ is an isomorphism mod $0$ between the Bernoulli shifts $(\Omega,\Sigma,\rho_\nu,\sigma^2)$ and $(\Omega,\Sigma,\rho_{\nu'},\sigma)$, i.e. $\Phi$ is an isomorphism mod $0$ between the probability spaces $(\Omega,\Sigma,\rho_\nu)$ and $(\Omega,\Sigma,\rho_{\nu'})$ such that
\[
\Phi \circ \sigma^2(\omega) = \sigma \circ \Phi(\omega) \quad \text{for } \rho_\nu \text{-a.e. } \omega \in \Omega.
\]

Let $G \colon \Omega \times \Tt^2 \to \Omega \times \Tt^2$ be the $\nu'$-random additive perturbation of $g^2_V$. Define $H \colon \Omega \times \Tt^2 \to \Omega \times \Tt^2$ by
\[
H(\omega,y)=(\Phi(\omega),y), \quad (\omega,y) \in \Omega \times \Tt^2.
\]
Then $H_*(\rho_\nu\times m)=\rho_{\nu'}\times m$, and $H$ conjugates $F^2$ to $G$ mod $0$. Indeed, from~\eqref{eq:two-steps}, it follows that for $\rho_\nu$-a.e. $\omega\in\Omega$ and every $n \ge 1$,
\begin{align}
\label{eq:identity}
G^n_{\Phi(\omega)} & =\tau_{\phi(\omega_{2n-2},\omega_{2n-1})}\circ g_V^{,2}\circ\cdots\circ
\tau_{\phi(\omega_0,\omega_1)}\circ g_V^{,2} \\
&=\tau_{\omega_{2n-1}}\circ g_V\circ\tau_{\omega_{2n-2}}\circ g_V\circ\cdots\circ
\tau_{\omega_1}\circ g_V\circ\tau_{\omega_0}\circ g_V = F^{2n}_\omega.  \notag
\end{align}

Denote by $\lambda^+_G$ and $\lambda^+_F$ the maximal Lyapunov exponents of $G$ and $F$, respectively. For $(\rho_\nu \times m)$-a.e. $(\omega,y) \in \Omega \times \Tt^2$, identity~\eqref{eq:identity} gives
\begin{align*}
\lambda^+_G\left(\Phi(\omega),y\right) & = \lim_{n \to +\infty} \frac{1}{n} \log \|DG^n{\Phi(\omega)}(y)\| \\
& =  \lim_{n \to +\infty} \frac{1}{n} \log  \|DF^{2n}{\omega}(y)\| =  2 \lambda^+_F(\omega,y).
\end{align*}
Since $\Phi_*\rho_\nu=\rho_{\nu'}$, it follows that $\lambda^+_F$ vanishes (is positive) $\rho_\nu\times m$-a.e. if and only if $\lambda^+_G$ vanishes (is positive) $\rho_{\nu'}\times m$-a.e.

We now show that Proposition~\ref{prop:standard-absolute} applies to the system $G$. The measure $\nu'$ is absolutely continuous with respect to $m$. Hence, we only need to verify the trace condition for $g^2_V$. A direct computation yields
\[
\operatorname{tr} Dg^2_V(y_1,y_2) = 2+2V'(y_1) + V'\bigl(y_1+y_2+V(y_1)\bigr)\bigl(2+V'(y_1)\bigr).
\]
Let $\bar y_1\in\Tt^1$ be such that $V'(\bar y_1)=0$. Then
\[
\operatorname{tr} Dg^2_V(\bar y_1,y_2) = 2+2V'\bigl(\bar y_1+y_2+V(\bar y_1)\bigr).
\]
As $y_2$ varies in $\Tt^1$, the argument $\bar y_1+y_2+V(\bar y_1)$ ranges over all of $\Tt^1$. If $V$ is not constant, then $V'$ takes both positive and negative values and vanishes somewhere. Thus the trace of $Dg^2_V$ takes values larger than $2$ and values in $(0,2)$, and the hypotheses of Corollary~\ref{cor:simple} are satisfied.

Therefore the proof of Proposition~\ref{prop:standard-absolute}, applied to $g_V^2$ and $\nu'$, shows that $\lambda_G^+$ is a.e. constant, with zero value if and only if $V$ is constant. The relation between $\lambda_G^+$ and $\lambda_F^+$, then gives the desired conclusion for $\lambda_F^+$.

\end{proof}









\section*{Acknowledgements}

G. Del Magno acknowledges support from the MIUR Excellence Department Project (CUP I57G22000700001) awarded to the Department of Mathematics, University of Pisa, and from the PRIN Project 2022NTKXCX \textit{``Stochastic properties of dynamical systems''}, funded by the Italian Ministry of University and Research.

JLD and JPG were funded by national funds through FCT – Fundação para a Ciência e a Tecnologia, I.P., in the framework of the unit UID/06522/2025.


\bibliographystyle{abbrv}
\bibliography{rfrncs}
\end{document}